\tikzset{external/system call = {%
    pdflatex \tikzexternalcheckshellescape%
    -halt-on-error
    -interaction=batchmode
    -jobname "\image" "\texsource"}}
\newtheorem{theorem}{Theorem}[section]
\newtheorem{lemma}[theorem]{Lemma}
\newtheorem{corollary}[theorem]{Corollary}
\newtheorem{remark}[theorem]{Remark}
\newtheorem{assumption}[theorem]{Assumption}
\algrenewcommand\algorithmicrequire{\textbf{Input:}}
\algrenewcommand\algorithmicensure{\textbf{Output:}}
\numberwithin{equation}{section}
\acrodef{fom}[FOM]{full-order model}
\acrodef{lti}[LTI]{linear time-invariant}
\acrodef{mse}[MSE]{mean square error}
\acrodef{pde}[PDE]{partial differential equation}
\acrodef{pod}[POD]{proper orthogonal decomposition}
\acrodef{rb}[RB]{reduced basis method}
\acrodef{rom}[ROM]{reduced-order model}
\acrodef{strom}[StROM]{structured reduced-order model}
\acrodef{mor}[MOR]{model order reduction}
\acrodef{ls}[LS]{least-squares}
\newcommand{\nfom}{\ensuremath{n}}
\newcommand{\nin}{\ensuremath{n_{\textnormal{i}}}}
\newcommand{\nout}{\ensuremath{n_{\textnormal{o}}}}
\newcommand{\Af}{\ensuremath{\mathcal{A}}}
\newcommand{\Bf}{\ensuremath{\mathcal{B}}}
\newcommand{\Cf}{\ensuremath{\mathcal{C}}}
\newcommand{\Ef}{\ensuremath{\mathcal{E}}}
\newcommand{\cAf}{\ensuremath{A}}
\newcommand{\cBf}{\ensuremath{B}}
\newcommand{\cCf}{\ensuremath{C}}
\newcommand{\xf}{\ensuremath{x}}
\newcommand{\yf}{\ensuremath{y}}
\newcommand{\nrom}{\ensuremath{r}}
\newcommand{\Ar}{\ensuremath{\hat{\mathcal{A}}}}
\newcommand{\Br}{\ensuremath{\hat{\mathcal{B}}}}
\newcommand{\Cr}{\ensuremath{\hat{\mathcal{C}}}}
\newcommand{\Er}{\ensuremath{\hat{\mathcal{E}}}}
\newcommand{\cAr}{\ensuremath{\hat{A}}}
\newcommand{\cBr}{\ensuremath{\hat{B}}}
\newcommand{\cCr}{\ensuremath{\hat{C}}}
\newcommand{\car}{\ensuremath{\hat{\alpha}}}
\newcommand{\cbr}{\ensuremath{\hat{\beta}}}
\newcommand{\ccr}{\ensuremath{\hat{\gamma}}}
\newcommand{\qAr}{\ensuremath{q_{\Ar}}}
\newcommand{\qBr}{\ensuremath{q_{\Br}}}
\newcommand{\qCr}{\ensuremath{q_{\Cr}}}
\newcommand{\xr}{\ensuremath{\hat{x}}}
\newcommand{\yr}{\ensuremath{\hat{y}}}
\newcommand{\xrd}{\ensuremath{\xr_{d}}}
\newcommand{\hA}{\ensuremath{\hat{A}}}
\newcommand{\hB}{\ensuremath{\hat{B}}}
\newcommand{\hC}{\ensuremath{\hat{C}}}
\newcommand{\hE}{\ensuremath{\hat{E}}}
\newcommand{\hG}{\ensuremath{\hat{G}}}
\newcommand{\hH}{\ensuremath{\hat{H}}}
\newcommand{\hS}{\ensuremath{\hat{S}}}
\newcommand{\hT}{\ensuremath{\hat{T}}}
\newcommand{\hX}{\ensuremath{\hat{X}}}
\newcommand{\hY}{\ensuremath{\hat{Y}}}
\newcommand{\obj}{\ensuremath{\mathcal{J}}}
\newcommand{\cH}{\ensuremath{\mathcal{H}}}
\newcommand{\cL}{\ensuremath{\mathcal{L}}}
\newcommand{\Htwo}{\ensuremath{\cH_{2}}}
\newcommand{\htwo}{\ensuremath{h_{2}}}
\newcommand{\Ltwo}{\ensuremath{\cL_{2}}}
\newcommand{\Linf}{\ensuremath{\cL_{\infty}}}
\newcommand{\HtwoLtwo}{\ensuremath{\cH_{2} \otimes \cL_{2}}}
\newcommand{\pp}{\ensuremath{\mathsf{p}}}
\newcommand{\pset}{\ensuremath{\mathcal{P}}}
\newcommand{\npar}{\ensuremath{n_{\textnormal{p}}}}
\newcommand{\ppp}{\ensuremath{\xi}}
\newcommand{\psetp}{\ensuremath{\Xi}}
\newcommand{\CC}{\ensuremath{\mathbb{C}}}
\newcommand{\DD}{\ensuremath{\mathbb{D}}}
\newcommand{\RR}{\ensuremath{\mathbb{R}}}
\newcommand{\CCpar}{\ensuremath{\CC^{\npar}}}
\newcommand{\CCi}{\ensuremath{\CC^{\nin}}}
\newcommand{\CCo}{\ensuremath{\CC^{\nout}}}
\newcommand{\CCr}{\ensuremath{\CC^{\nrom}}}
\newcommand{\CCoi}{\ensuremath{\CC^{\nout \times \nin}}}
\newcommand{\CCor}{\ensuremath{\CC^{\nout \times \nrom}}}
\newcommand{\CCri}{\ensuremath{\CC^{\nrom \times \nin}}}
\newcommand{\CCro}{\ensuremath{\CC^{\nrom \times \nout}}}
\newcommand{\CCrr}{\ensuremath{\CC^{\nrom \times \nrom}}}
\newcommand{\RRf}{\ensuremath{\RR^{\nfom}}}
\newcommand{\RRi}{\ensuremath{\RR^{\nin}}}
\newcommand{\RRo}{\ensuremath{\RR^{\nout}}}
\newcommand{\RRr}{\ensuremath{\RR^{\nrom}}}
\newcommand{\RRff}{\ensuremath{\RR^{\nfom \times \nfom}}}
\newcommand{\RRfi}{\ensuremath{\RR^{\nfom \times \nin}}}
\newcommand{\RRof}{\ensuremath{\RR^{\nout \times \nfom}}}
\newcommand{\RRoi}{\ensuremath{\RR^{\nout \times \nin}}}
\newcommand{\RRor}{\ensuremath{\RR^{\nout \times \nrom}}}
\newcommand{\RRri}{\ensuremath{\RR^{\nrom \times \nin}}}
\newcommand{\RRrr}{\ensuremath{\RR^{\nrom \times \nrom}}}
\DeclarePairedDelimiter{\myparen}{\lparen}{\rparen}
\DeclarePairedDelimiter{\mybrack}{\lbrack}{\rbrack}
\DeclarePairedDelimiter{\mybrace}{\lbrace}{\rbrace}
\DeclarePairedDelimiter{\abs}{\lvert}{\rvert}
\DeclarePairedDelimiterXPP{\mydiag}[1]{\operatorname{diag}}{\lparen}{\rparen}{}{#1}
\DeclarePairedDelimiterXPP{\normtwo}[1]{}{\lVert}{\rVert}{_{2}}{#1}
\DeclarePairedDelimiterXPP{\normF}[1]{}{\lVert}{\rVert}{_{\operatorname{F}}}{#1}
\DeclarePairedDelimiterXPP{\normHtwo}[1]{}{\lVert}{\rVert}{_{\Htwo}}{#1}
\DeclarePairedDelimiterXPP{\normhtwo}[1]{}{\lVert}{\rVert}{_{\htwo}}{#1}
\DeclarePairedDelimiterXPP{\normLtwo}[1]{}{\lVert}{\rVert}{_{\Ltwo}}{#1}
\DeclarePairedDelimiterXPP{\normLtwomu}[1]{}{\lVert}{\rVert}{_{\Ltwo(\pset, \measure)}}{#1}
\DeclarePairedDelimiterXPP{\normHtwoLtwo}[1]{}{\lVert}{\rVert}{_{\HtwoLtwo}}{#1}
\DeclarePairedDelimiterXPP{\normLinf}[1]{}{\lVert}{\rVert}{_{\Linf}}{#1}
\DeclarePairedDelimiterXPP{\normLinfmu}[1]{}{\lVert}{\rVert}{_{\Linf(\pset, \measure)}}{#1}
\DeclarePairedDelimiterXPP{\ip}[2]{}{\langle}{\rangle}{}{#1, #2}
\DeclarePairedDelimiterXPP{\ipF}[2]{}{\langle}{\rangle}{_{\operatorname{F}}}{#1, #2}
\DeclarePairedDelimiterXPP{\ipHtwo}[2]{}{\langle}{\rangle}{_{\Htwo}}{#1, #2}
\DeclarePairedDelimiterXPP{\iphtwo}[2]{}{\langle}{\rangle}{_{\htwo}}{#1, #2}
\DeclarePairedDelimiterXPP{\ipLtwo}[2]{}{\langle}{\rangle}{_{\Ltwo}}{#1, #2}
\DeclarePairedDelimiterXPP{\ipLtwomu}[2]{}{\langle}{\rangle}{_{\Ltwo(\pset, \measure)}}{#1, #2}
\DeclarePairedDelimiterXPP{\trace}[1]{\operatorname{tr}}{\lparen}{\rparen}{}{#1}
\DeclarePairedDelimiterXPP{\vecop}[1]{\operatorname{vec}}{\lparen}{\rparen}{}{#1}
\DeclarePairedDelimiterXPP{\Real}[1]{\operatorname{Re}}{\lparen}{\rparen}{}{#1}
\DeclarePairedDelimiterXPP{\myspan}[1]{\operatorname{span}}{\lbrace}{\rbrace}{}{#1}
\newcommand{\measure}{\ensuremath{\mu}}
\DeclareMathOperator{\dif}{d\!}
\DeclareMathOperator*{\argmin}{arg\,min}
\DeclareMathOperator*{\esssup}{ess\,sup}
\newcommand{\difm}[1]{\ensuremath{\dif{\measure(#1)}}}
\newcommand{\fundef}[3]{\ensuremath{#1 \colon #2 \to #3}}
\newcommand{\tran}{^{\operatorname{T}}}
\newcommand{\mtran}{^{-\!\operatorname{T}}}
\newcommand{\herm}{^{*}}
\newcommand{\mherm}{^{-*}}
\newcommand{\imag}{\boldsymbol{\imath}}
\newcommand{\ones}{\ensuremath{\mathds{1}}}
\let\hat\widehat%
\definecolor{mplC0}{HTML}{1F77B4}
\definecolor{mplC1}{HTML}{FF7F0E}
\definecolor{mplC2}{HTML}{2CA02C}
\definecolor{mplC3}{HTML}{D62728}
\definecolor{mplC4}{HTML}{9467BD}
\definecolor{mplC5}{HTML}{8C564B}
\definecolor{mplC6}{HTML}{E377C2}
\definecolor{mplC7}{HTML}{7F7F7F}
\definecolor{mplC8}{HTML}{BCBD22}
\definecolor{mplC9}{HTML}{17BECF}
\pgfplotsset{compat=1.10}
\begin{document}

\title{A Unifying Framework for Interpolatory
  \texorpdfstring{$\Ltwo$}{L2}-optimal Reduced-order Modeling\thanks{%
    This work was partially funded by the U.S. National Science Foundation under
    grant DMS-1923221.
    Parts of this material are based upon work supported by the National
    Science Foundation under Grant No.\ DMS-1929284 while the authors were in
    residence at the Institute for Computational and Experimental Research in
    Mathematics in Providence, RI, during the Spring 2020 Reunion Event for
    Model and Dimension Reduction in Uncertain and Dynamic Systems program.
  }}

\author{%
  \href{https://orcid.org/0000-0002-9437-7698}{%
    \includegraphics[scale=0.06]{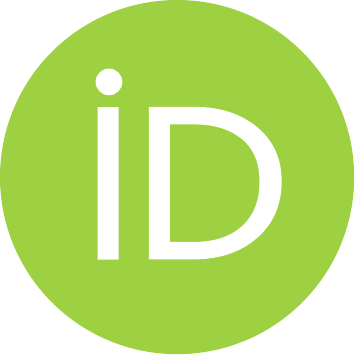}}\hspace{1mm}Petar~Mlinari\'c%
  \thanks{Department of Mathematics, Virginia Tech, Blacksburg, VA 24061
    (\texttt{mlinaric@vt.edu}).}
	\And
	\href{https://orcid.org/0000-0003-4564-5999}{%
    \includegraphics[scale=0.06]{orcid.pdf}}\hspace{1mm}Serkan~Gugercin%
  \thanks{Department of Mathematics and Division of Computational Modeling and
    Data Analytics, Academy of Data Science, Virginia Tech, Blacksburg, VA 24061
    (\texttt{gugercin@vt.edu}).}
}

% Uncomment to override  the `A preprint' in the header
\renewcommand{\headeright}{Technical Report}
\renewcommand{\undertitle}{Technical Report}
\renewcommand{\shorttitle}{A Unifying Framework for Interpolatory
  $\Ltwo$-optimal Modeling}

% Add PDF metadata to help others organize their library
% Once the PDF is generated, you can check the metadata with
% $ pdfinfo template.pdf
\hypersetup{
  pdftitle={A Unifying Framework for Interpolatory L2-optimal Reduced-order
    Modeling},
  pdfauthor={Petar~Mlinari\'c, Serkan~Gugercin},
}

\maketitle

\begin{abstract}
  We develop a unifying framework
  for interpolatory $\Ltwo$-optimal reduced-order modeling
  for a wide classes of problems
  ranging from stationary models to parametric dynamical systems.
  We first show that
  the framework naturally covers the well-known interpolatory necessary
  conditions for $\Htwo$-optimal model order reduction and
  leads to the interpolatory conditions for $\HtwoLtwo$-optimal model order
  reduction of multi-input/multi-output parametric dynamical systems.
  Moreover,
  we derive novel interpolatory optimality conditions
  for rational discrete least-squares minimization  and
  for $\Ltwo$-optimal model order reduction of a class of parametric stationary
  models.
  We show that bitangential Hermite interpolation appears as the main tool
  for optimality across different domains.
  The theoretical results are illustrated on two numerical examples.
\end{abstract}

\keywords{%
  reduced-order modeling \and
  parametric stationary problems \and
  linear time-invariant systems \and
  optimization \and
  $\Ltwo$ norm \and
  interpolation
}

\section{Introduction}
Interpolatory methods have been one of the most commonly used \ac{mor}
techniques, see, e.g.,~\cite{AntBG20,Benetal17,BeaG17}.
For $\Htwo$-optimal \ac{mor} of \ac{lti} dynamical systems,
the necessary optimality conditions are known and appear in the form of
(bitangential) Hermite interpolation of the underlying transfer
function~\cite{MeiL67,GugAB06,GugAB08,AntBG10}.
These interpolatory optimality conditions have formed the foundation of various
algorithms and have been extended to different settings; see,
e.g.,~\cite{GugPBS12,GugSW13,FlaBG13,AniBGA13,FlaG15,BreBG15,BenGG18,BenB12}.
But in various other important settings,
such as in the optimal approximation of stationary problems and
discrete \ac{ls} rational fitting,
it is not yet established whether the optimality requires interpolation
(as in the $\Htwo$-case) and if so,
what those interpolation conditions are.
For instance,
reduced basis methods use a greedy selection of sampling (interpolation) points
to match the solution at these points~\cite{HesRS16,QuaMN16}.
Is there an underlying framework for interpolatory optimality conditions?

The authors recently developed a data-driven framework for $\Ltwo$-optimal
reduced-order modeling of parametric systems~\cite{MliG22}.
In this paper, we show how~\cite{MliG22} provides a unifying framework for
interpolatory optimal approximation both for dynamical systems and stationary
problems.
We prove that
bitangential Hermite interpolation is the necessary condition for optimality
not only for approximation of \ac{lti} systems in the $\Htwo$ norm,
but also in many other prominent cases,
thus extending the optimal interpolation theory to a broader class of
problems.

First we recall the $\Ltwo$-optimal reduced-order modeling problem discussed
in~\cite{MliG22}:
Consider a parameter-to-output mapping
\begin{align}\label{eq:ymapping}
  \fundef{\yf}{\pset}{\CCoi}
\end{align}
where $\pset \subseteq \CCpar$ and $\nin, \nout, \npar$ are positive integers.
Assume that evaluating $\yf(\pp)$ is expensive.
Thus the goal is to construct a high-fidelity reduced-order mapping
(approximation) $\fundef{\yr}{\pset}{\CCoi}$,
which is much cheaper to evaluate than $\yf$.
Inspired by the structures arising in projection-based (parametric) \ac{mor},
\cite{MliG22} constructs a structured \ac{rom}
\begin{subequations}\label{eq:rom}
  \begin{align}
    \Ar(\pp) \xr(\pp) & = \Br(\pp), \\*
    \yr(\pp) & = \Cr(\pp) \xr(\pp),
  \end{align}
\end{subequations}
with a parameter-separable form
\begin{equation}\label{eq:rom-param-sep-form}
  \Ar(\pp) = \sum_{i = 1}^{\qAr} \car_i(\pp) \cAr_i, \quad
  \Br(\pp) = \sum_{j = 1}^{\qBr} \cbr_j(\pp) \cBr_j, \quad
  \Cr(\pp) = \sum_{k = 1}^{\qCr} \ccr_k(\pp) \cCr_k,
\end{equation}
where
$\xr(\pp) \in \CCr$ is the reduced state,
$\yr(\pp) \in \CCoi$ is the approximate output,
$\Ar(\pp) \in \CCrr$,
$\Br(\pp) \in \CCri$,
$\Cr(\pp) \in \CCor$,
$\fundef{\car_i, \cbr_j, \ccr_k}{\pset}{\CC}$,
$\cAr_i \in \RRrr$,
$\cBr_j \in \RRri$, and
$\cCr_k \in \RRor$.
Note that when the reduced-order dimension $\nrom$ is small,
evaluating the structured \ac{rom}, i.e., evaluating $\yr(\pp)$, is cheap.
In~\cite{MliG22}, we showed that the structure of the \ac{rom}
in~\eqref{eq:rom}--\eqref{eq:rom-param-sep-form} covers a wide range of problems
including (parametric) \ac{lti} systems and
models arising from discretization of stationary parametric partial differential
equations.
We revisit some concrete choices of $\car_i, \cbr_j, \ccr_k$ later in the paper.
In~\cite{MliG22}, we developed numerical algorithms to construct the
\ac{rom}~\eqref{eq:rom} in a purely data-driven fashion and called it a
data-driven \ac{rom}.
In this paper, we simply call it \iac{strom}.

In order to judge the quality of a \ac{rom}, one needs an error measure.
In~\cite{MliG22}, we constructed the \ac{strom} to minimize the squared $\Ltwo$
error
\begin{equation}\label{eq:l2-cost}
  \obj\myparen*{\cAr_i, \cBr_j, \cCr_k}
  = \normLtwomu*{\yf - \yr}^2
  = \int_{\pset} \normF*{\yf(\pp) - \yr(\pp)}^2 \difm{\pp}
\end{equation}
and derived the gradients of $\obj$ with respect to the \ac{strom} matrices
$\cAr_i, \cBr_j, \cCr_k$.
These gradient formulae, which we also recall in \Cref{sec:prelim},
were then used in developing an optimization-based reduced-order modeling
algorithm.

Starting with the formulation of~\cite{MliG22},
our goals here are to develop
a unifying framework for interpolatory $\Ltwo$-optimal reduced-order modeling
that covers both stationary and dynamical problems and
to prove that bitangential Hermite interpolation
is the necessary conditions for optimality
in a much broader classes of problems than previously studied.
More specifically, our main contributions are as follows:
\begin{enumerate}
\item We show that the existing interpolatory optimality conditions for
  approximating \ac{lti} systems is a special case of our formulation and
  directly follows from it (\Cref{{sec:lti-h2}}).
\item We derive interpolatory optimality conditions for approximating parametric
  \ac{lti} systems (\Cref{sec:grimm}).
\item We derive interpolatory optimality conditions for rational discrete
  \ac{ls} measure (\Cref{sec:lti-ls}).
\item We derive interpolatory optimality conditions for approximation of
  parametric stationary problems (\Cref{sec:stat}).
\end{enumerate}

The rest of paper is organized as follows.
In \Cref{sec:prelim},
we recall some of the main results from~\cite{MliG22} and
give the necessary optimality conditions,
which we use repeatedly throughout the paper.
In \Cref{sec:lti},
we show applications to \ac{lti} systems, both parametric and non-parametric,
using the continuous, Lebesgue measure.
We consider the discrete \ac{ls} measure in \Cref{sec:lti-ls},
where we derive interpolatory conditions for the \ac{ls} problem.
In \Cref{sec:stat},
we consider a class of stationary parametric systems and
derive interpolatory conditions for the $\Ltwo$-optimal \acp{rom}.
Conclusions are given in \Cref{sec:conclusion}.

\section{Mathematical Preliminaries}%
\label{sec:prelim}
Here we recall one of the main results of~\cite{MliG22},
namely the gradients of $\obj$~\eqref{eq:l2-cost} with respect to the \ac{strom}
matrices, and
then present the necessary optimality conditions that immediately
follow from this result.

\subsection{Gradients of the Squared \texorpdfstring{$\Ltwo$}{L2} Error}
We begin with the necessary assumptions.
\begin{assumption}\label{assumption:all}
  For the problem setup in~\eqref{eq:ymapping}--\eqref{eq:l2-cost},
  let the following hold:
  \begin{enumerate}
  \item The set $\pset \subseteq \CCpar$ is closed under conjugation
    ($\overline{\pp} \in \pset$ for all $\pp \in \pset$).
  \item The measure $\measure$ over $\pset$ is closed under conjugation
    (for any measurable set $S$,
    $\overline{S}$ is measurable and
    $\measure(\overline{S}) = \measure(S)$).
  \item The function $\fundef{\yf}{\pset}{\CCoi}$ is
    measurable,
    closed under conjugation
    ($\overline{\yf(\pp)} = \yf(\overline{\pp})$ for all $\pp \in \pset$), and
    square-integrable
    ($\normLtwomu{\yf} < \infty$).
  \item The scalar functions $\fundef{\car_i, \cbr_j, \ccr_k}{\pset}{\CC}$, for
    $i = 1, 2, \ldots, \qAr$,
    $j = 1, 2, \ldots, \qBr$, and
    $k = 1, 2, \ldots, \qCr$,
    are
    measurable,
    closed under conjugation, and
    \begin{equation}\label{eq:abc-l2-bounded}
      \int_{\pset}
      \myparen*{
        \frac{
          \sum_{j = 1}^{\qBr} \abs*{\cbr_j(\pp)}
          \sum_{k = 1}^{\qCr} \abs*{\ccr_k(\pp)}
        }{
          \sum_{i = 1}^{\qAr} \abs{\car_i(\pp)}
        }
      }^{\!\!2}
      \difm{\pp}
      < \infty.
    \end{equation}
  \item\label{item:inv}
    The matrices $\cAr_1, \cAr_2, \ldots, \cAr_{\qAr}$ are such that
    \begin{equation*}
      \esssup_{\pp \in \pset} \, \normF*{\car_i(\pp) {\Ar(\pp)}^{-1}}
      < \infty, \quad
      i = 1, 2, \ldots, \qAr,
    \end{equation*}
    where $\Ar$ is as in~\eqref{eq:rom-param-sep-form}.
  \end{enumerate}
\end{assumption}
These assumptions trivially hold in many cases, see~\cite{MliG22}.
Note that item~\ref{item:inv} requires $\Ar(\pp)$ be invertible for
$\measure$-almost all $\pp$ in $\pset$.
\begin{theorem}[Theorem~3.7 in~\cite{MliG22}]\label{thm:grad}
  Let $\pset$, $\measure$, $\yf$, $\car_i$, $\cbr_j$, $\ccr_k$, and $\cAr_i$
  satisfy \Cref{assumption:all}.
  Then, the gradients of $\obj$ with respect to the \ac{strom} matrices are
  \begin{subequations}\label{eq:grad}
    \begin{align}
      \label{eq:grad-A}
      \nabla_{\cAr_i} \obj = {}
      &
        2 \int_{\pset} \car_i(\overline{\pp})
        \xrd(\pp)
        \mybrack*{\yf(\pp) - \yr(\pp)}
        \xr(\pp)\herm
        \difm{\pp},
      & i = 1, 2, \ldots, \qAr, \\*
      \label{eq:grad-B}
      \nabla_{\cBr_j} \obj = {}
      &
        2 \int_{\pset} \cbr_j(\overline{\pp})
        \xrd(\pp)
        \mybrack*{\yr(\pp) - \yf(\pp)}
        \difm{\pp},
      & j = 1, 2, \ldots, \qBr, \\*
      \label{eq:grad-C}
      \nabla_{\cCr_k} \obj = {}
      &
        2 \int_{\pset} \ccr_k(\overline{\pp})
        \mybrack*{\yr(\pp) - \yf(\pp)}
        \xr(\pp)\herm
        \difm{\pp},
      & k = 1, 2, \ldots, \qCr,
    \end{align}
  \end{subequations}
  where
  $\Ar(\pp)\herm \xrd(\pp) = \Cr(\pp)\herm$ is the reduced dual state equation,
  $\xrd(\pp) \in \CCro$ is the reduced dual state, and
  $(\cdot)\herm$ denotes the conjugate transpose.
\end{theorem}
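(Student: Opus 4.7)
My plan is to derive the three gradient formulas by computing Gâteaux derivatives of $\obj$ in real-matrix directions $\delta\cAr_i$, $\delta\cBr_j$, $\delta\cCr_k$, then massaging the resulting expressions using the dual state and the conjugation symmetry provided by \Cref{assumption:all}. The first preliminary step is to justify differentiation under the integral sign: the bound in~\eqref{eq:abc-l2-bounded} together with item~\ref{item:inv} controls $\xr(\pp)$ and $\yr(\pp)$ uniformly enough in $\pp$ that the integrand in~\eqref{eq:l2-cost} and its pointwise derivatives are dominated by an $\Ltwo(\pset,\measure)$-function, allowing interchange of $\frac{d}{d\varepsilon}$ and $\int_\pset$. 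A second preliminary step is to record that $\xr$, $\yr$, and $\xrd$ inherit conjugation symmetry from $\yf$ and the scalar coefficients: since each $\cAr_i,\cBr_j,\cCr_k$ is real, $\Ar(\overline{\pp}) = \overline{\Ar(\pp)}$, $\Br(\overline{\pp}) = \overline{\Br(\pp)}$, $\Cr(\overline{\pp}) = \overline{\Cr(\pp)}$, hence $\xr(\overline{\pp})=\overline{\xr(\pp)}$, $\yr(\overline{\pp})=\overline{\yr(\pp)}$, and similarly for $\xrd$ via the adjoint equation.

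For the core computation, I write $\normF{\yf-\yr}^2 = \trace*{(\yf-\yr)(\yf-\yr)\herm}$ and use $\xr(\pp)=\Ar(\pp)^{-1}\Br(\pp)$. Perturbing $\cBr_j \to \cBr_j + \varepsilon\,\delta B$ yields $\delta\yr(\pp)=\Cr(\pp)\Ar(\pp)^{-1}\cbr_j(\pp)\,\delta B$, so the directional derivative becomes
\[
  \tfrac{d}{d\varepsilon}\big|_{\varepsilon=0}\obj
  = -2\,\Real*{\int_{\pset} \cbr_j(\pp)\,\trace*{\delta B\,(\yf(\pp)-\yr(\pp))\herm \Cr(\pp)\Ar(\pp)^{-1}}\difm{\pp}}.
\]
Using the dual state identity $\Cr(\pp)\Ar(\pp)^{-1}=\xrd(\pp)\herm$ (from $\Ar(\pp)\herm\xrd(\pp)=\Cr(\pp)\herm$) and cyclicity of trace gives an integrand of the form $\cbr_j(\pp)\,\trace{\delta B\,(\yf-\yr)\herm \xrd\herm}$. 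The perturbations in $\cCr_k$ and $\cAr_i$ are handled analogously: for $\cCr_k$ one uses $\delta\yr=\ccr_k\,\delta C\,\xr$ directly, and for $\cAr_i$ one uses the standard identity $\delta\xr=-\Ar^{-1}\car_i\,\delta A\,\xr$ followed again by the dual-state substitution $\Cr\Ar^{-1}=\xrd\herm$, producing an integrand of the form $\car_i(\pp)\,\trace{\delta A\,\xr(\pp)(\yf-\yr)\herm\xrd\herm}$.

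The remaining task is to convert each real-part-of-complex-integral into the purely real integral written in~\eqref{eq:grad}. The key observation is that each integrand $f(\pp)$ arising above satisfies $f(\overline{\pp})=\overline{f(\pp)}$ thanks to conjugation symmetry of $\yf$, $\xr$, $\xrd$, and the scalar coefficients; by closure of $\measure$ under conjugation, $\int f\,\difm{\pp}$ is therefore real and equals $\int f(\overline{\pp})\,\difm{\pp}$. Applying the substitution $\pp\mapsto\overline{\pp}$ replaces $\cbr_j(\pp)$ by $\cbr_j(\overline{\pp})$ and conjugates $\xrd(\pp)$, $(\yf(\pp)-\yr(\pp))$ etc., and after identifying the resulting expression with the trace inner product $\trace{(\nabla_{\cBr_j}\obj)\tran\,\delta B}$ against a real direction $\delta B$, one reads off $\nabla_{\cBr_j}\obj = 2\int_\pset \cbr_j(\overline{\pp})\,\xrd(\pp)[\yr(\pp)-\yf(\pp)]\,\difm{\pp}$. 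The same bookkeeping, with the outer factors $\xr(\pp)\herm$ restored appropriately on the right, gives~\eqref{eq:grad-A} and~\eqref{eq:grad-C}.

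The hardest part, in my view, is not any single calculation but keeping the complex/real accounting honest: the matrices $\cAr_i,\cBr_j,\cCr_k$ live in real spaces while the scalar coefficients and the parameter-to-output map are complex, so one must be careful that the derivative in a real direction is correctly identified with a real matrix via the real Frobenius inner product. The conjugation-symmetry argument is exactly what converts the naively complex integrand $\cbr_j(\pp)(\dotvar)$ into the symmetric form $\cbr_j(\overline{\pp})(\dotvar)$ that produces a real gradient without a leading $\Real{\dotvar}$. Beyond that, the dominated-convergence justification for interchanging derivative and integral, and the verification of conjugation symmetry for $\xrd$, are the only nontrivial technical ingredients.
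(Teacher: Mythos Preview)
The paper does not prove \Cref{thm:grad}; it is recalled verbatim from~\cite{MliG22} (Theorem~3.7 there) as a preliminary, so there is no in-paper argument to compare against. Your proposal---perturb each real matrix, differentiate the squared Frobenius norm under the integral sign (dominated convergence via \Cref{assumption:all}), insert the dual-state identity $\Cr(\pp)\Ar(\pp)^{-1}=\xrd(\pp)\herm$, and then use the conjugation symmetry of $\pset$, $\measure$, $\yf$, and the scalar coefficients to replace the $\Real{\dotvar}$ wrapper by the substitution $\pp\mapsto\overline{\pp}$---is the standard and correct derivation, and your real/complex bookkeeping (real perturbation matrices, real Frobenius pairing, conjugation-closed integrands) is accurate.
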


Based on these gradients, in~\cite{MliG22}
we developed an $\Ltwo$-optimal reduced-order modeling algorithm and
demonstrated it on various examples,
both stationary parametric problems and \ac{lti} systems.
In this paper, we are more interested in the theoretical implications
of \Cref{thm:grad} than the algorithmic ones and show how it provides a
unifying framework for interpolatory optimal approximation.

\subsection{Necessary Conditions for \texorpdfstring{$\Ltwo$}{L2}-optimality}
An important consequence of \Cref{thm:grad} is that,
by setting the gradients to zero,
it yields the necessary optimality conditions for $\Ltwo$-optimal reduced-order
modeling using parameter-separable forms.
\begin{corollary}\label{cor:cond}
  Let $\pset$, $\measure$, $\yf$, $\car_i$, $\cbr_j$, $\ccr_k$, and $\cAr_i$
  satisfy \Cref{assumption:all}.
  Furthermore, let $(\cAr_i, \cBr_j, \cCr_k)$ be an $\Ltwo$-optimal \ac{strom}.
  Then
  \begin{subequations}\label{eq:cond}
    \begin{align}
      \label{eq:cond-C}
      \int_{\pset}
      \ccr_k(\overline{\pp})
      \yf(\pp) \xr(\pp)\herm
      \difm{\pp}
      & =
        \int_{\pset}
        \ccr_k(\overline{\pp})
        \yr(\pp) \xr(\pp)\herm
        \difm{\pp},
      & k = 1, 2, \ldots, \qCr, \\*
      \label{eq:cond-B}
      \int_{\pset}
      \cbr_j(\overline{\pp})
      \xr_d(\pp) \yf(\pp)
      \difm{\pp}
      & =
        \int_{\pset}
        \cbr_j(\overline{\pp})
        \xr_d(\pp) \yr(\pp)
        \difm{\pp},
      & j = 1, 2, \ldots, \qBr, \\*
      \label{eq:cond-A}
      \int_{\pset}
      \car_i(\overline{\pp})
      \xr_d(\pp) \yf(\pp) \xr(\pp)\herm
      \difm{\pp}
      & =
        \int_{\pset}
        \car_i(\overline{\pp})
        \xr_d(\pp) \yr(\pp) \xr(\pp)\herm
        \difm{\pp},
      & i = 1, 2, \ldots, \qAr.
    \end{align}
  \end{subequations}
\end{corollary}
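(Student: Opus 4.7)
The plan is to deduce \Cref{cor:cond} as an immediate first-order optimality consequence of \Cref{thm:grad}. Since the corollary inherits the hypotheses of that theorem, the objective $\obj$ is differentiable in each of its matrix arguments $\cAr_i$, $\cBr_j$, $\cCr_k$, and the gradients are given explicitly by~\eqref{eq:grad}. At an $\Ltwo$-optimal \ac{strom}, the objective must attain a (local) minimum in each of these free matrix variables simultaneously, so each gradient must vanish.

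Concretely, I would set~\eqref{eq:grad-C} to the zero matrix in $\CCor$, divide through by $2$, and split the bracket $\yr(\pp) - \yf(\pp)$ into its two terms to bring $\yf$ to the left-hand side; this yields~\eqref{eq:cond-C}. The same manipulation applied to~\eqref{eq:grad-B} (a vanishing element of $\CCri$) yields~\eqref{eq:cond-B}, and applying it to~\eqref{eq:grad-A} (a vanishing element of $\CCrr$) yields~\eqref{eq:cond-A}. The only small bookkeeping point is that the sign convention in~\eqref{eq:grad-A} has $\yf(\pp) - \yr(\pp)$ while~\eqref{eq:grad-B} and~\eqref{eq:grad-C} carry $\yr(\pp) - \yf(\pp)$; after equating to zero and rearranging, all three identities take the symmetric form in~\eqref{eq:cond}, so no additional care is needed.

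There is essentially no analytic obstacle, since \Cref{assumption:all} already supplies the integrability and invertibility needed for \Cref{thm:grad} to apply, and the first-order condition $\nabla \obj = 0$ at a minimizer is standard. The only step worth noting is that optimality is asserted jointly over all three families of matrices, so one is entitled to set \emph{each} of the three gradients separately to zero; this is precisely the content of viewing $\obj$ as a function on the product space of admissible matrix tuples. Consequently the proof reduces to quoting \Cref{thm:grad}, invoking first-order optimality, and performing the algebraic rearrangement described above.
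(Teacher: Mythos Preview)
Your proposal is correct and is essentially the same approach the paper takes: it presents \Cref{cor:cond} as an immediate consequence of \Cref{thm:grad}, obtained ``by setting the gradients to zero,'' with no further argument supplied. Your write-up merely makes explicit the trivial rearrangement of the bracket $\yf - \yr$ (or $\yr - \yf$) after equating each gradient to zero, which the paper leaves implicit.
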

The optimality conditions in \Cref{cor:cond} are
\emph{interpolatory} in the sense that the quantities (integrals)
in the left-hand sides of~\eqref{eq:cond-C}--\eqref{eq:cond-A}
involving the \ac{fom} output $\yf(\pp)$
need to be interpolated by the same integrals
involving the \ac{strom} output $\yr(\pp)$.
This result highlights that any $\Ltwo$-optimal \ac{strom} with the
parameter-separable form~\eqref{eq:rom-param-sep-form} is interpolatory in the
sense of \Cref{cor:cond}.
By carefully selecting, in~\eqref{eq:cond},
the scalar functions $\car_i, \cbr_j, \ccr_k$,
the parameter space $\pset$, and
the measure $\mu$ over $\pset$,
we derive concrete interpolatory optimality conditions
(in the form of bitangential Hermite interpolation)
for important classes of \acp{rom},
including non-parametric and parametric \ac{lti} systems,
stationary models, and
for discrete \ac{ls} problems,
thus providing a unifying framework for interpolatory $\Ltwo$-optimal
approximation across different domains.

\section{Linear Time-invariant Systems: Continuous Measure}%
\label{sec:lti}
Our goal in this section is to illustrate
(i) how \Cref{thm:grad} and \Cref{cor:cond} cover a wide range of
settings arising in optimal \ac{mor} of dynamical systems and
(ii) to develop new conditions for optimality.
Furthermore,
this analysis sets the stage for the interpolatory conditions we derive
in \Cref{sec:lti-ls} for discrete \ac{ls} minimization and
in \Cref{sec:stat} for stationary problems.

\subsection{\texorpdfstring{$\Htwo$}{H2}-optimal Model Order Reduction}%
\label{sec:lti-h2}
Interpolatory necessary optimality conditions are known for $\Htwo$-optimal
\ac{mor} of \ac{lti} systems,
both for the continuous-time case~\cite{MeiL67,GugAB08,AntBG10} and
the discrete-time case~\cite{BunKVetal10,GugAB08}.
In the following, we show that these conditions are a special case of the
conditions in \Cref{cor:cond}.

A continuous-time, finite-dimensional \ac{lti} system is given by
\begin{subequations}\label{eq:lti-cont-fom}
  \begin{align}
    E \dot{\xf}(t) & = A \xf(t) + B u(t), \quad \xf(0) = 0, \\*
    \yf(t) & = C \xf(t),
  \end{align}
\end{subequations}
where
$\xf(t) \in \RRf$ is the state,
$u(t) \in \RRi$ is the input,
$\yf(t) \in \RRo$ is the output,
$E, A \in \RRff$,
$B \in \RRfi$, and
$C \in \RRof$.
We assume that $E$ is invertible and all eigenvalues of $E^{-1} A$ have negative
real parts.
The rational function
\begin{align}\label{eq:lti-tf}
  H(s) = C {(s E - A)}^{-1} B
\end{align}
is the transfer function of~\eqref{eq:lti-cont-fom}
and satisfies $Y(s) = H(s) U(s)$
assuming $Y$ and $U$, the Laplace transforms of $\yf$ and $u$, exist;
see, e.g.,~\cite{Son98},
for the conditions for the existence of the Laplace transform.
Based on the assumptions above,
$H$ belongs to the $\Htwo^{\nout \times \nin}(\CC_+)$ Hardy space
(where $\CC_+$ is the open left half-plane),
which is the set of holomorphic functions $\fundef{F}{\CC_+}{\CCoi}$ such that
\(
  \sup_{\eta > 0}
  \int_{-\infty}^{\infty} \normF{F(\eta + \imag \omega)}^2 \dif{\omega}
  < \infty
\).
It is known that $F$ can be extended to $\overline{\CC_+}$ and the $\Htwo$ norm
can be defined as
\begin{equation*}
  \normHtwo{F}
  = \myparen*{
    \frac{1}{2 \pi}
    \int_{-\infty}^{\infty} \normF{F(\imag \omega)}^2 \dif{\omega}
  }^{1/2}.
  % \ \textnormal{ and } \
  % \ipHtwo{F}{G}
  % = \frac{1}{2 \pi}
  % \int_{-\infty}^{\infty}
  % \trace*{F(\imag \omega)\herm G(\imag \omega)}
  % \dif{\omega}.
\end{equation*}
The analysis applies to any $H \in \Htwo^{\nout \times \nin}(\CC_+)$ as
\iac{fom},
including infinite-dimensional systems, and
not just the finite-dimensional systems as in~\eqref{eq:lti-cont-fom}.

The goal of $\Htwo$-optimal \ac{mor} is to find \iac{rom}
\begin{subequations}\label{eq:lti-cont-rom}
  \begin{align}
    \hE \dot{\xr}(t) & = \hA \xr(t) + \hB u(t), \quad \xr(0) = 0, \\*
    \yr(t) & = \hC \xr(t),
  \end{align}
\end{subequations}
with
the reduced state $\xr(t) \in \RRr$,
the approximate output $\yr(t) \in \RRo$, and
the reduced quantities
$\hE, \hA \in \RRrr$,
$\hB \in \RRri$, and
$\hC \in \RRor$
such that its transfer function $\hH(s) = \hC (s \hE - \hA)^{-1} \hB$ minimizes
the $\Htwo$ error $\normHtwo{H - \hH}$.
The assumptions on the \ac{rom} are that $\hE$ be invertible and all eigenvalues
of $\hE^{-1} \hA$ have negative real parts.

To state the $\Htwo$-optimal interpolatory conditions,
let $\hH(s) = \hC (s \hE - \hA)^{-1} \hB$ have $\nrom$ distinct poles.
Let $\hT, \hS \in \CCrr$ be invertible matrices such that
$\hS\herm \hE \hT = I$ and
$\hS\herm \hA \hT = \Lambda$ with
$\Lambda = \mydiag{\lambda_1, \lambda_2, \ldots, \lambda_{\nrom}}$.
Then, we can write $\hH(s)$ as
\begin{align}\label{eq:pole-res-form}
  \hH(s)
  & =
    \hC \hT \myparen*{s I - \Lambda}^{-1} \hS\herm \hB
    =
    \sum_{j = 1}^{\nrom}
    \frac{\hC \hT e_j e_j\tran \hS\herm \hB}{s - \lambda_j}
    =
    \sum_{j = 1}^{\nrom} \frac{c_j b_j\herm}{s - \lambda_j},
\end{align}
where $c_j = \hC \hT e_j \in \CCo$, $b_j = \hB\tran \hS e_j\in \CCi$, and
$e_j$ denotes the $j$th unit vector (of appropriate size).
The formulation of $\hH$ in~\eqref{eq:pole-res-form} is called the
\emph{pole-residue form}
where $\lambda_j$ are the poles and $c_j b_j\herm$ are the (rank-$1$) residues.
If $\hH$ is an $\Htwo$-optimal \ac{rom} of $H$,
then it satisfies the interpolation conditions
\begin{subequations}\label{eq:h2-cond}
  \begin{align}
    \label{eq:h2-cond-c}
    H\myparen*{-\overline{\lambda_k}} b_k
    & =
      \hH\myparen*{-\overline{\lambda_k}} b_k, \\*
    \label{eq:h2-cond-b}
    c_k\herm H\myparen*{-\overline{\lambda_k}}
    & =
      c_k\herm \hH\myparen*{-\overline{\lambda_k}}, \\*
    \label{eq:h2-cond-a}
    c_k\herm H'\myparen*{-\overline{\lambda_k}} b_k
    & =
      c_k\herm \hH'\myparen*{-\overline{\lambda_k}} b_k,
  \end{align}
\end{subequations}
for $k = 1, 2, \ldots, \nrom$,
where $H'$ and $\hH'$ denote the derivatives with respect to $s$;
see~\cite{GugAB08,AntBG20}.
More specifically,
\eqref{eq:h2-cond-c} is called the right-tangential interpolation
condition,
\eqref{eq:h2-cond-b} the left-tangential interpolation condition, and
\eqref{eq:h2-cond-a} the bitangential Hermite interpolation condition.
We refer to all three conditions together as the bitangential Hermite
interpolation conditions for $\Htwo$-optimality.
They state that
the optimal reduced-order transfer function $\hH$
tangentially interpolates $H$ (and $\hH'$ interpolates $H'$)
at the mirror images of its own poles, i.e., at $-\overline{\lambda_k}$,
along the tangent directions $c_k$ and $b_k$
determined by its own rank-$1$ residues $c_k b_k\herm$.
These optimal interpolation conditions
have led to effective numerical methods for optimal \ac{mor} and
has been extended to various setting;
for details we refer the reader to~\cite{GugAB08,BeaG17,AntBG20} and
the references therein.
We also refer the reader to \cite{DruSZ14,DruS11,FenAB17,FenB19,FenB21} and the
references therein for greedy-based selections of interpolation points in
projection-based \ac{mor} of \ac{lti} systems.

Now we show how these optimal interpolatory conditions can be recovered from
\Cref{cor:cond} as a special case.
First we note that by setting
$\pset = \imag \RR$,
$\pp = \imag \omega$,
$\mu = \frac{1}{2 \pi} \lambda_{\imag \RR}$
(where $\lambda_{\imag \RR}$ is the Lebesgue measure over $\imag \RR$),
$\yf = H$, and
$\yr = \hH$,
we find $\normLtwomu{\yf} = \normHtwo{H}$ and
thus $\normLtwomu{\yf - \yr} = \normHtwo{H - \hH}$.
Furthermore, the reduced transfer function $\hH(s) = \hC (s \hE - \hA)^{-1} \hB$
can be viewed as \iac{strom} as in~\eqref{eq:rom} by rewriting it as
\begin{align*}
  \myparen*{s \hE - \hA} \hX(s) & = \hB, \\*
  \hH(s) & = \hC \hX(s),
\end{align*}
and by selecting
$\xr = \hX$,
$\yr = \hH$,
$\qAr = 2$,
$\car_1(\pp) = \pp$,
$\car_2(\pp) = -1$,
$\qBr = 1$,
$\cbr_1(\pp) = 1$,
$\qCr = 1$, and
$\ccr_1(\pp) = 1$
in~\eqref{eq:rom-param-sep-form}.
To obtain the optimality conditions~\eqref{eq:h2-cond} from
\Cref{cor:cond},
postmultiply the left-hand side of~\eqref{eq:cond-C} by $\hT\mherm e_k$
to get
\begin{align*}
  &
    \int_{\pset}
    \ccr_1(\overline{\pp}) \yf(\pp) \xr(\pp)\herm
    \difm{\pp}
    \, \hT\mherm e_k
    =
    \frac{1}{2 \pi}
    \int_{-\infty}^{\infty}
    H(\imag \omega)
    \hB\tran \myparen*{\imag \omega \hE - \hA}\mherm
    \dif{\omega} \, \hT\mherm e_k \\
  & =
    \frac{1}{2 \pi}
    \int_{-\infty}^{\infty}
    H(\imag \omega)
    \hB\tran \hS \myparen*{\imag \omega I - \Lambda}\mherm \hT\herm
    \dif{\omega} \, \hT\mherm e_k
    =
    \frac{1}{2 \pi}
    \int_{-\infty}^{\infty}
    \frac{H(\imag \omega) b_k}{-\imag \omega - \overline{\lambda_k}}
    \dif{\omega}.
\end{align*}
Switching to a contour integral with $s = \imag \omega$ and
$\dif{s} = \imag \dif{\omega}$,
we find
\begin{align*}
  &
    \int_{\pset}
    \ccr_1(\overline{\pp}) \yf(\pp) \xr(\pp)\herm
    \difm{\pp}
    \, \hT\mherm e_k
    =
    \frac{1}{2 \pi \imag}
    \oint_{\imag \RR}
    \frac{H(s) b_k}{-s - \overline{\lambda_k}}
    \dif{s}
    =
    -\frac{1}{2 \pi \imag}
    \oint_{\imag \RR}
    \frac{H(s) b_k}{s - (-\overline{\lambda_k})}
    \dif{s} \\
  & =
    \lim_{R \to \infty}
    -\frac{1}{2 \pi \imag}
    \oint_{\Gamma_R}
    \frac{H(s) b_k}{s - (-\overline{\lambda_k})}
    \dif{s}
    =
    H\myparen*{-\overline{\lambda_k}} b_k,
\end{align*}
where
$\Gamma_R
= [-\imag R, \imag R]
\cup \{R e^{\imag \omega} : \omega \in [-\frac{\pi}{2}, \frac{\pi}{2}]\}$
is the clockwise contour of a semidisk as in~\cite[Lemma~1.1]{AntBG10}.
Applying the same manipulations to the right-hand side of~\eqref{eq:cond-C}
yields $\hH\myparen{-\overline{\lambda_k}} b_k$.
Thus the first optimality conditions~\eqref{eq:cond-C} in \Cref{cor:cond}
yield the right-tangential interpolation conditions~\eqref{eq:h2-cond-c}
for $\Htwo$-optimality as a special case.

Similarly,
premultiplying the left-hand side of~\eqref{eq:cond-B} by $e_k\tran \hS^{-1}$,
we obtain
\begin{align*}
  e_k\tran \hS^{-1}
  \int_{\pset}
  \cbr_1(\overline{\pp}) \xr_d(\pp) \yf(\pp)
  \difm{\pp}
  =
  c_k\herm H\myparen*{-\overline{\lambda_k}}.
\end{align*}
Thus, \eqref{eq:cond-B} yields the left-tangential
interpolation condition~\eqref{eq:h2-cond-b} for $\Htwo$-optimality
(after applying the same manipulations to the right-hand side
of~\eqref{eq:cond-B}).
Finally, taking the left-hand side of~\eqref{eq:cond-A} in
\Cref{cor:cond} related to $\hA$ with $\car_2(\pp) = -1$,
premultiplying it by $e_k\tran \hS^{-1}$, and
postmultiplying it by $\hT\mherm e_k$
gives
\begin{align*}
  &
    e_k\tran \hS^{-1}
    \int_{\pset} \car_2(\overline{\pp}) \xr_d(\pp) \yf(\pp) \xr(\pp)\herm
    \difm{\pp}
    \, \hT\mherm e_k
    =
    -\frac{1}{2 \pi}
    \int_{-\infty}^{\infty}
    \frac{c_k\herm H(\imag \omega) b_k}%
    {\myparen*{-\imag \omega - \overline{\lambda_k}}^2}
    \dif{\omega} \\
  & =
    -\frac{1}{2 \pi \imag}
    \oint_{\imag \RR}
    \frac{c_k\herm H(s) b_k}{\myparen*{-s - \overline{\lambda_k}}^2}
    \dif{s}
    =
    c_k\herm H'\myparen*{-\overline{\lambda_k}} b_k,
\end{align*}
obtaining the final Hermite interpolation condition~\eqref{eq:h2-cond-a}.
Thus, we recover the interpolatory $\Htwo$-optimality
conditions~\eqref{eq:h2-cond}
as a special case of the more general $\Ltwo$-optimality
conditions~\eqref{eq:cond} in \Cref{cor:cond}.

\begin{remark}
  The case of discrete-time \ac{lti} systems follows similarly.
  Discrete-time systems are obtained by replacing the derivative term
  $\dot{x}(t)$ in~\eqref{eq:lti-cont-fom} with the time shift $x(t + 1)$ and
  restricting the time $t$ to integers.
  Transfer function $H$ of a discrete-time \ac{lti} system has exactly the same
  form as~\eqref{eq:lti-tf},
  but we now assume that $E^{-1} A$ has eigenvalues in the open unit disk.
  The corresponding Hardy space is
  $\htwo^{\nout \times \nin}(\overline{\DD}^c)$
  ($\DD$ is the open unit disk and
  $\overline{\DD}^c$ is the complement of the closed unit disk)
  containing functions $F$ such that
  \(
    \sup_{r > 1}
    \int_{0}^{2 \pi} \normF{F(r e^{\imag \omega})}^2 \dif{\omega}
    < \infty
  \)
  and the $\htwo$ norm is given by
  \begin{equation*}
    \normhtwo{F}
    = \myparen*{
      \frac{1}{2 \pi}
      \int_{0}^{2 \pi} \normF*{F(e^{\imag \omega})}^2 \dif{\omega}
    }^{1/2}.
  \end{equation*}
  Then, we recover the interpolatory necessary optimality conditions
  from~\cite{BunKVetal10,GugAB08} analogously to the continuous-time case,
  in this case by setting
  $\pset = \partial\DD$,
  $\pp = e^{\imag \omega}$,
  $\mu = \frac{1}{2 \pi} \lambda_{\partial\DD}$
  (where $\lambda_{\partial\DD}$ is the Lebesgue measure over $\partial\DD$),
  and taking the contour integral over the unit circle.
  In particular, if
  $\hH(s) = \sum_{j = 1}^{\nrom} \frac{c_j b_j\herm}{s - \lambda_j}$ is an
  $\htwo$-optimal \ac{rom} for $H$,
  then
  \begin{equation}\label{eq:h2-cond-dt}
    H\myparen*{\frac{1}{\overline{\lambda_k}}} b_k
    = \hH\myparen*{\frac{1}{\overline{\lambda_k}}} b_k,\ \
    c_k\herm H\myparen*{\frac{1}{\overline{\lambda_k}}}
    = c_k\herm \hH\myparen*{\frac{1}{\overline{\lambda_k}}},\ \
    c_k\herm H'\myparen*{\frac{1}{\overline{\lambda_k}}} b_k
    = c_k\herm \hH'\myparen*{\frac{1}{\overline{\lambda_k}}} b_k,
  \end{equation}
  for $k = 1, 2, \ldots, \nrom$.
  As in the continuous-time case,
  bitangential Hermite interpolation forms the necessary conditions for
  $\htwo$-optimality
  where the interpolation points depend on the reduced-order poles and
  the tangent directions on the reduced-order residues.
  The difference is that the mirroring of the reduced-order poles to the
  interpolation points is now done with respect to the unit circle.
\end{remark}

\subsection{\texorpdfstring{$\HtwoLtwo$}{H2xL2}-optimal Parametric Model Order
  Reduction}%
\label{sec:grimm}
In \Cref{sec:lti-h2},
we considered non-parametric \ac{lti} systems as in~\eqref{eq:lti-cont-fom} and
showed how \Cref{cor:cond} recovers the well-known $\Htwo$-optimality
conditions.
In this section,
we focus on parametric \ac{lti} systems
where the underlying dynamics depend on a set of parameters and
thus consider \emph{jointly optimal} approximation in the frequency and
parameter space.
We show how our $\Ltwo$-optimal modeling framework naturally covers this problem
as well and
extends the existing interpolatory necessary optimality conditions to a more
general setting.

We consider \acp{fom} of the form
\begin{subequations}\label{eq:plti-fom}
  \begin{align}
    \Ef(\ppp) \dot{\xf}(t, \ppp)
    & = \Af(\ppp) \xf(t, \ppp) + \Bf(\ppp) u(t),
      \quad \xf(0, \ppp) = 0, \\*
    \yf(t, \ppp)
    & = \Cf(\ppp) \xf(t, \ppp),
  \end{align}
\end{subequations}
where
$\ppp \in \psetp \subset \CC$ is the parameter,
$\psetp$ is the parameter space,
$\xf(t, \ppp) \in \RRf$ is the state,
$u(t) \in \RR$ is the input,
$\yf(t, \ppp) \in \RR$ is the output,
$\Ef(\ppp), \Af(\ppp) \in \RRff$,
$\Bf(\ppp) \in \RRfi$, and
$\Cf(\ppp) \in \RRof$.
In practical applications, the variable $\ppp$ can correspond to, e.g.,
geometry, material properties (such as thickness), boundary conditions etc.
We assume that
$\Ef, \Af, \Bf, \Cf$ are holomorphic,
$\Ef(\ppp)$ is invertible for all $\ppp \in \psetp$, and
$\Ef(\ppp)^{-1} \Af(\ppp)$ has eigenvalues in the open left half-plane for all
$\ppp \in \psetp$.
Then the corresponding parametric transfer function of~\eqref{eq:plti-fom} is
$H(s, \ppp) = \Cf(\ppp) (s \Ef(\ppp) - \Af(\ppp))^{-1} \Bf(\ppp)$.
Note that unlike in the non-parametric \ac{lti} case,
transfer function now depends on both the frequency variable $s$ and the
parameter $\ppp$.
Thus a reduced-order transfer-function approximation
$\hH(s, \ppp) = \Cr(\ppp) (s \Er(\ppp) - \Ar(\ppp))^{-1} \Br(\ppp)$
to $H(s, \ppp)$ should have high fidelity both in $s$ and $\ppp$.
How should one choose the space in which to approximate $H(s,\ppp)$?

Grimm~\cite{Gri18} focused on a simplified problem and considered the
single-input and single-output parametric \ac{lti} system, i.e.,
$\nin = \nout = 1$ in~\eqref{eq:plti-fom}, and
as the approximation space considered the Hardy space in two variables
$\Htwo(\CC_+ \times \DD)$,
which is the space of holomorphic functions $\fundef{F}{\CC_+ \times \DD}{\CC}$
such that
\begin{equation*}
  \sup_{\eta > 0,\ 0 < r < 1}
  \int_{0}^{2 \pi}
  \int_{-\infty}^{\infty}
  \abs*{F(\eta + \imag \omega, r e^{\imag \omega_{\ppp}})}^2
  \dif{\omega}
  \dif{\omega_{\ppp}}
  < \infty.
\end{equation*}
The corresponding $\Htwo(\CC_+ \times \DD)$ norm,
refereed to as the $\HtwoLtwo$ norm,
is given by
\begin{equation*}
  \normHtwoLtwo{F}
  =
  \myparen*{
    \frac{1}{4 \pi^2}
    \int_{0}^{2 \pi}
    \int_{-\infty}^{\infty}
    \abs*{F(\imag \omega, e^{\imag \omega_{\ppp}})}^2
    \dif{\omega}
    \dif{\omega_{\ppp}}
  }^{1/2}.
\end{equation*}
The $\HtwoLtwo$~used in~\cite{Gri18} is a special case of the more general
$\HtwoLtwo$ norm definition introduced in~\cite{BauBBetal11}.
Unlike the non-parametric \ac{lti} case considered in \Cref{sec:lti-h2},
the optimal interpolatory conditions for \ac{mor} in the
$\HtwoLtwo$ norm are not known in general except for some special cases.
For example, \cite{BauBBetal11} shows that if the parametric dependency $\ppp$
only appears in $\Bf(\ppp)$ and $\Cf(\ppp)$ in~\eqref{eq:plti-fom}, and
$\Af$ and $\Ef$ are non-parametric,
then one can convert the resulting problem into an equivalent $\Htwo$-optimal
approximation problem and
obtain interpolatory optimality conditions.
However, this is restrictive since in most parametric problems,
$\Af$ and $\Ef$ vary with a parameter.

Grimm~\cite{Gri18}, instead,
considers a simplification in the form of the \ac{rom}.
Recall that the pole-residue form~\eqref{eq:pole-res-form} has proved vital in
deriving the $\Htwo$-optimality conditions.
Inspired by~\eqref{eq:pole-res-form},
\cite{Gri18} considered \acp{rom} with the form
\begin{equation}\label{eq:grimm-Hhat}
  \hH(s, \ppp)
  = \sum_{i = 1}^{\nrom} \sum_{j = 1}^{\nrom_{\ppp}}
  \frac{\phi_{ij}}{(s - \lambda_i) (\ppp - \pi_j)},
\end{equation}
where
$\phi_{ij} \in \CC$,
$\lambda_i \in \CC_-$, and
$\pi_j \in \overline{\DD}^c$
($\CC_-$ is the open left half-plane).
In~\eqref{eq:grimm-Hhat},
one may view $\lambda_i$'s as the frequency poles (in the $s$ variable) and
$\pi_j$'s as the parameter poles (in the $\ppp$ variable).
Then~\cite[Thm~3.3.4]{Gri18} shows that
the interpolatory necessary conditions for $\hH$ in~\eqref{eq:grimm-Hhat} to be
an $\HtwoLtwo$-optimal \ac{rom} are
\begin{subequations}\label{eq:grimm-cond}
  \begin{align}
    \label{eq:grimm-cond-a}
    H\myparen*{-\lambda_k, \frac{1}{\pi_{\ell}}}
    & =
      \hH\myparen*{-\lambda_k, \frac{1}{\pi_{\ell}}}, \\
    \label{eq:grimm-cond-b}
    \sum_{j = 1}^{\nrom_{\ppp}}
    \frac{\phi_{kj}}{\pi_j}
    \frac{\partial H}{\partial s}
    \myparen*{-\lambda_k, \frac{1}{\pi_j}}
    & =
      \sum_{j = 1}^{\nrom_{\ppp}}
      \frac{\phi_{kj}}{\pi_j}
      \frac{\partial \hH}{\partial s}
      \myparen*{-\lambda_k, \frac{1}{\pi_j}}, \\
    \label{eq:grimm-cond-c}
    \sum_{i = 1}^{\nrom}
    \phi_{i \ell}
    \frac{\partial H}{\partial \ppp}
    \myparen*{-\lambda_i, \frac{1}{\pi_{\ell}}}
    & =
      \sum_{i = 1}^{\nrom}
      \phi_{i \ell}
      \frac{\partial \hH}{\partial \ppp}
      \myparen*{-\lambda_i, \frac{1}{\pi_{\ell}}},
  \end{align}
\end{subequations}
for $k = 1, 2, \ldots, \nrom$ and $\ell = 1, 2, \ldots, \nrom_{\ppp}$.
In the rest of this section, we show that the interpolation conditions in
\Cref{cor:cond} cover the $\HtwoLtwo$ framework of~\cite{Gri18} and
at the same time extend the analysis to $\HtwoLtwo$ approximation of
\emph{multiple-input/multiple-output} parametric \ac{lti} systems.
Therefore, in~\eqref{eq:plti-fom} we do not need to assume $\nin = \nout = 1$ as
done in~\cite{Gri18}.

For matrix-valued transfer functions $H$ and $\hH$,
the squared $\HtwoLtwo$ error is defined as
\begin{equation}\label{eq:grimm-obj}
  \normHtwoLtwo*{H - \hH}^2
  =
  \frac{1}{4 \pi^2}
  \int_{0}^{2 \pi} \int_{-\infty}^{\infty}
  \normF*{
    H\myparen*{\imag \omega, e^{\imag \omega_{\ppp}}}
    - \hH\myparen*{\imag \omega, e^{\imag \omega_{\ppp}}}
  }^2
  \dif{\omega}
  \dif{\omega_{\ppp}}.
\end{equation}
Observe that
the \ac{rom}~\eqref{eq:grimm-Hhat} (for the single-input/single-output case) can
be written as \iac{strom} in~\eqref{eq:rom} using the representation
\begin{align*}
  \mybrack*{(s I_{\nrom} - \Lambda) \otimes (\ppp I_{\nrom_{\ppp}} - \Pi)}
  \hX(s, \ppp)
  & = \ones, \\*
  \hH(s, \ppp)
  & = \phi\tran \hX(s, \ppp),
\end{align*}
where
$\Lambda = \mydiag{\lambda_1, \lambda_2, \ldots, \lambda_{\nrom}}$,
$\Pi = \mydiag{\pi_1, \pi_2, \ldots, \pi_{\nrom_{\ppp}}}$,
$\ones = [1, \ldots, 1]\tran$, and
$\phi = [\phi_{1, 1}, \ldots, \phi_{\nrom, \nrom_{\ppp}}]\tran$.
To extend this formulation to multiple-input/multiple-output problems,
we then consider
\begin{subequations}\label{eq:grimm-rom}
  \begin{align}
    \mybrack*{\myparen*{s \hE - \hA}
    \otimes \myparen*{\ppp \hE_{\ppp} - \hA_{\ppp}}}
    \hX(s, \ppp)
    & = \hB, \\*
    \hH(s, \ppp)
    & = \hC \hX(s, \ppp),
  \end{align}
\end{subequations}
where
$\hE, \hA \in \RRrr$,
$\hE_{\ppp}, \hA_{\ppp} \in \RR^{\nrom_{\ppp} \times \nrom_{\ppp}}$,
$\hB \in \RR^{\nrom \nrom_{\ppp} \times \nin}$, and
$\hC \in \RR^{\nout \times \nrom \nrom_{\ppp}}$.
Note that~\eqref{eq:grimm-rom} is \iac{strom} as in~\eqref{eq:rom} with
$\pp = (s, \ppp)$ and
$\Ar(\pp)
= \myparen{s \hE - \hA}
\otimes \myparen{\ppp \hE_{\ppp} - \hA_{\ppp}}$.
By expanding $\Ar(\pp)$,
we find that the form~\eqref{eq:grimm-rom} corresponds to $\qAr = 4$ and
\begin{subequations}\label{eq:KronstROM}
  \begin{alignat}{8}
    & \car_1(s, \ppp) = s \ppp, & \quad
    & \car_2(s, \ppp) = -s, & \quad
    & \car_3(s, \ppp) = -\ppp, & \quad
    & \car_4(s, \ppp) = 1, \\*
    & \cAr_1 = \hE \otimes \hE_{\ppp}, & \quad
    & \cAr_2 = \hE \otimes \hA_{\ppp}, & \quad
    & \cAr_3 = \hA \otimes \hE_{\ppp}, & \quad
    & \cAr_4 = \hA \otimes \hA_{\ppp}.
  \end{alignat}
\end{subequations}
Therefore, the \ac{rom}~\eqref{eq:grimm-rom} is \iac{strom}~\eqref{eq:rom}
fitting into our $\Ltwo$-optimal modeling framework
where the \ac{strom} matrices have an additional Kronecker structure.
The following lemma will help us in computing the gradients of the cost function
with respect to $\hA$ and $\hA_{\ppp}$ having this specific Kronecker structure.
\begin{lemma}\label{lem:kron-grad}
  Let $\fundef{F}{\RR^{n m \times n m}}{\RR}$ be a differentiable function at
  $X = A \otimes B$ where $A \in \RR^{n \times n}$ and $B \in \RR^{m \times m}$
  are nonzero matrices.
  Let $\fundef{G}{\RR^{n \times n}}{\RR}$ be defined as
  $G(A_1) = F(A_1 \otimes B)$.
  Then $G$ is differentiable at $A$ and
  \begin{equation*}
    \nabla G(A)
    =
    \sum_{j = 1}^m
    \myparen*{I_n \otimes e_j\tran B_L\herm}
    \nabla F(X)
    \myparen*{I_n \otimes B_R\herm e_j},
  \end{equation*}
  for any $B_L, B_R \in \CC^{m \times m}$ such that $B = B_L B_R$.
  Similarly, let $\fundef{H}{\RR^{m \times m}}{\RR}$ be defined as
  $H(B_1) = F(A \otimes B_1)$.
  Then $H$ is differentiable at $B$ and
  \begin{equation*}
    \nabla H(B)
    =
    \sum_{i = 1}^n
    \myparen*{e_i\tran A_L\herm \otimes I_m}
    \nabla F(X)
    \myparen*{A_R\herm e_i \otimes I_m},
  \end{equation*}
  for any $A_L, A_R \in \CC^{n \times n}$ such that $A = A_L A_R$.
\end{lemma}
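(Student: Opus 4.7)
The plan is to derive each identity by applying the chain rule to the linear embedding $L \colon A_1 \mapsto A_1 \otimes B$ (resp.\ $B_1 \mapsto A \otimes B_1$), substituting the factorization $B = B_L B_R$ (resp.\ $A = A_L A_R$), and then unfolding the resulting Frobenius pairing blockwise. Both identities follow the same template, so I will describe the $G$ case in detail and only sketch the changes for $H$.

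Because $L(A_1) := A_1 \otimes B$ is linear, the chain rule combined with the Frobenius inner-product characterization of the gradient yields
\begin{equation*}
  \ipF{\nabla G(A)}{\Delta A}
  = dG(A)[\Delta A]
  = dF(X)[\Delta A \otimes B]
  = \ipF{\nabla F(X)}{\Delta A \otimes B}
\end{equation*}
for every $\Delta A \in \RR^{n \times n}$. Substituting $B = B_L B_R$, the mixed-product property gives $\Delta A \otimes B = (I_n \otimes B_L)(\Delta A \otimes I_m)(I_n \otimes B_R)$, and the adjoint rule $\ipF{Y}{M Z N} = \ipF{M\herm Y N\herm}{Z}$ transforms the above into
\begin{equation*}
  \ipF{\nabla G(A)}{\Delta A}
  = \ipF{Z}{\Delta A \otimes I_m},
  \quad
  Z := (I_n \otimes B_L\herm)\,\nabla F(X)\,(I_n \otimes B_R\herm).
\end{equation*}

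The core step is to evaluate this last pairing blockwise. Partitioning $Z$ into $n \times n$ blocks $Z_{ij}$ of size $m \times m$, left/right multiplication by $I_n \otimes (\cdot)$ acts blockwise on $\nabla F(X)$, so $Z_{ij} = B_L\herm\,[\nabla F(X)]_{ij}\,B_R\herm$. Since $\Delta A \otimes I_m$ has $(k,j)$ block $(\Delta A)_{kj} I_m$, a direct block computation yields $\ipF{Z}{\Delta A \otimes I_m} = \sum_{i,k}(\Delta A)_{ki}\,\trace{Z_{ki}}$, whence
\begin{equation*}
  (\nabla G(A))_{ki}
  = \trace{Z_{ki}}
  = \sum_{j=1}^m e_j\tran B_L\herm\,[\nabla F(X)]_{ki}\,B_R\herm e_j
  = \sum_{j=1}^m (e_k\tran \otimes e_j\tran B_L\herm)\,\nabla F(X)\,(e_i \otimes B_R\herm e_j),
\end{equation*}
which, via the identities $e_k\tran (I_n \otimes e_j\tran B_L\herm) = e_k\tran \otimes e_j\tran B_L\herm$ and $(I_n \otimes B_R\herm e_j) e_i = e_i \otimes B_R\herm e_j$, is exactly the $(k,i)$ entry of $\sum_{j=1}^m (I_n \otimes e_j\tran B_L\herm)\,\nabla F(X)\,(I_n \otimes B_R\herm e_j)$. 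That completes the first identity; invariance under the choice of factorization is immediate from cyclicity of the trace, since every entry depends on $B_L, B_R$ only through $B_R\herm B_L\herm = B\herm$.

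For $H$, the same argument applies with $A \otimes \Delta B = (A_L \otimes I_m)(I_n \otimes \Delta B)(A_R \otimes I_m)$. The only change at the block step is that pairing against the \emph{block-diagonal} matrix $I_n \otimes \Delta B$ selects $\ipF{\sum_i W_{ii}}{\Delta B}$ for $W := (A_L\herm \otimes I_m)\,\nabla F(X)\,(A_R\herm \otimes I_m)$, so $\nabla H(B) = \sum_i W_{ii}$; extracting each diagonal block via $W_{ii} = (e_i\tran \otimes I_m) W (e_i \otimes I_m)$ and applying the mixed-product property in the opposite direction produces the stated expression with $e_i$ in the outer Kronecker factor. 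The main obstacle throughout is purely notational---keeping block indices consistent across the two ways of slicing $\Delta A \otimes I_m$ versus $I_n \otimes \Delta B$---but the algebra is otherwise routine.
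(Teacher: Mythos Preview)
Your proof is correct and takes a more direct route than the paper's. The paper introduces the orthogonal projection $P_A(Y) = \argmin_{A_1} \normF{Y - A_1 \otimes B}^2$ onto the subspace $V_A = \{A_1 \otimes B : A_1 \in \RR^{n \times n}\}$, derives $\nabla G(A) = \normF{B}^2\, P_A(\nabla F(X))$ from least-squares orthogonality, and then computes $P_A$ explicitly by expanding in the orthonormal basis $\{\normF{B}^{-1} e_k e_\ell\tran \otimes B\}$; the factorization $B = B_L B_R$ enters only at the very end, inside a trace manipulation. You instead invoke the factorization immediately via the mixed-product identity, push it through the Frobenius adjoint rule, and read off the gradient entrywise from a block-trace computation. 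Your argument is shorter and more elementary---it sidesteps the projection machinery entirely and makes the dependence on $B_L, B_R$ (and its irrelevance beyond $B_R\herm B_L\herm = B\tran$) transparent from the start---while the paper's version offers a cleaner geometric picture: the gradient of $G$ is, up to the scalar $\normF{B}^2$, the orthogonal projection of $\nabla F(X)$ onto the Kronecker subspace $V_A$. One small point worth making explicit in your write-up is that each $\trace(Z_{ki}) = \trace\bigl([\nabla F(X)]_{ki}\, B\tran\bigr)$ is real, which is what justifies identifying it with the $(k,i)$ entry of the real matrix $\nabla G(A)$; you effectively note this at the end via cyclicity, but stating it where the identification is made would tighten the argument.
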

\begin{proof}
  See \Cref{sec:lemma-proof}.
\end{proof}
Before stating the next theorem, we need to derive a pole-residue form
of~\eqref{eq:grimm-rom}.
Let $\hT, \hS \in \CCrr$ and
$\hT_{\ppp}, \hS_{\ppp} \in \CC^{\nrom_{\ppp} \times \nrom_{\ppp}}$
be invertible matrices such that
$\hS\herm \hE \hT = I_{\nrom}$,
$\hS\herm \hA \hT = \Lambda$,
$\hS_{\ppp}\herm \hE_{\ppp} \hT_{\ppp} = I_{\nrom_{\ppp}}$, and
$\hS_{\ppp}\herm \hA_{\ppp} \hT_{\ppp} = \Pi$.
Then
\begin{align}
  \nonumber
  \hH(s,\ppp) = {}
  &
    \hC
    \myparen*{\hT \otimes \hT_{\ppp}}
    \mybrack*{
      \myparen*{s I_{\nrom} - \Lambda}
      \otimes \myparen*{\ppp I_{\nrom_{\ppp}} - \Pi}
    }^{-1}
    \myparen*{\hS\herm \otimes \hS_{\ppp}\herm}
    \hB \\*
  \label{eq:plti-rom-pole-res-form}
  = {}
  &
    \sum_{i = 1}^{\nrom}
    \sum_{j = 1}^{\nrom_{\ppp}}
    \frac{c_{ij} b_{ij}\herm}{(s - \lambda_i) (\ppp - \pi_j)},
\end{align}
where $c_{ij} = \hC (\hT \otimes \hT_{\ppp}) (e_i \otimes e_j)$ and
$b_{ij} = \hB\tran (\hS \otimes \hS_{\ppp}) (e_i \otimes e_j)$.

\begin{theorem}
  Let $\hH$ be a \ac{strom} as in~\eqref{eq:grimm-rom} with the pole-residue
  form~\eqref{eq:plti-rom-pole-res-form}
  where $\lambda_i \in \CC_-$ and $\pi_j \in \overline{\DD}^c$ are pairwise
  distinct.
  If $\hH$ is an $\HtwoLtwo$-optimal approximation to
  $H \in \Htwo^{\nout \times \nin}(\CC_+ \times \DD)$, then
  \begin{subequations}\label{eq:grimm-mimo-cond}
    \begin{align}
      \label{eq:grimm-mimo-cond-1}
      H\myparen*{-\overline{\lambda_k}, \frac{1}{\overline{\pi_{\ell}}}}
      b_{k \ell}
      & =
        \hH\myparen*{-\overline{\lambda_k}, \frac{1}{\overline{\pi_{\ell}}}}
        b_{k \ell}, \\
      \label{eq:grimm-mimo-cond-2}
      c_{k \ell}\herm
      H\myparen*{-\overline{\lambda_k}, \frac{1}{\overline{\pi_{\ell}}}}
      & =
        c_{k \ell}\herm
        \hH\myparen*{-\overline{\lambda_k}, \frac{1}{\overline{\pi_{\ell}}}}, \\
      \label{eq:grimm-mimo-cond-3}
      \sum_{j = 1}^{\nrom_{\ppp}}
      \frac{1}{\overline{\pi_j}}
      c_{kj}\herm
      \frac{\partial H}{\partial s}
      \myparen*{-\overline{\lambda_k}, \frac{1}{\overline{\pi_j}}}
      b_{kj}
      & =
        \sum_{j = 1}^{\nrom_{\ppp}}
        \frac{1}{\overline{\pi_j}}
        c_{kj}\herm
        \frac{\partial \hH}{\partial s}
        \myparen*{-\overline{\lambda_k}, \frac{1}{\overline{\pi_j}}}
        b_{kj}, \\
      \label{eq:grimm-mimo-cond-4}
      \sum_{i = 1}^{\nrom}
      c_{i \ell}\herm
      \frac{\partial H}{\partial \ppp}
      \myparen*{-\overline{\lambda_i}, \frac{1}{\overline{\pi_{\ell}}}}
      b_{i \ell}
      & =
        \sum_{i = 1}^{\nrom}
        c_{i \ell}\herm
        \frac{\partial \hH}{\partial \ppp}
        \myparen*{-\overline{\lambda_i}, \frac{1}{\overline{\pi_{\ell}}}}
        b_{i \ell},
    \end{align}
  \end{subequations}
  for $k = 1, 2, \ldots, \nrom$ and $\ell = 1, 2, \ldots, \nrom_{\ppp}$.
\end{theorem}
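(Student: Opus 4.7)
The plan is to apply Corollary~\ref{cor:cond} to the StROM data encoded
in~\eqref{eq:grimm-rom}--\eqref{eq:KronstROM}: take
$\pset = \imag \RR \times \partial \DD$,
$\pp = (\imag \omega, e^{\imag \omega_{\ppp}})$,
$\difm{\pp} = \frac{1}{4 \pi^{2}}\,\dif{\omega}\,\dif{\omega_{\ppp}}$,
$\yf = H$, $\yr = \hH$,
$\qAr = 4$ with $\car_{i}, \cAr_{i}$ as in~\eqref{eq:KronstROM}, and
$\qBr = \qCr = 1$ with $\cbr_{1} = \ccr_{1} \equiv 1$,
$\cBr_{1} = \hB$, $\cCr_{1} = \hC$. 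Under these choices
Assumption~\ref{assumption:all} is verified by standard arguments and
$\obj = \normHtwoLtwo{H - \hH}^{2}$, so $\HtwoLtwo$-optimality of $\hH$ is
equivalent to the vanishing of the gradients of $\obj$ with respect to the six
free parameters $\hC$, $\hB$, $\hE$, $\hA$, $\hE_{\ppp}$, $\hA_{\ppp}$.

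The tangential conditions~\eqref{eq:grimm-mimo-cond-1}
and~\eqref{eq:grimm-mimo-cond-2} follow directly from~\eqref{eq:cond-C}
and~\eqref{eq:cond-B} by arguments mirroring Section~\ref{sec:lti-h2}, now in
two independent complex variables. For~\eqref{eq:cond-C}, post-multiplication
by $(\hT \otimes \hT_{\ppp})\mherm (e_{k} \otimes e_{\ell})$ combined with the
Kronecker factorization of $\hX$ read off
from~\eqref{eq:plti-rom-pole-res-form} and the identities $\overline{s} = -s$
on $\imag \RR$ and $\overline{\ppp} = 1/\ppp$ on $\partial \DD$ reduces the
integrand to
\(
  H(s,\ppp)\, b_{k \ell} \big/
  \bigl[(-s - \overline{\lambda_{k}})\,(1 - \ppp\, \overline{\pi_{\ell}})\bigr].
\)
Converting to an iterated contour integral in $s \in \imag \RR$ and
$\ppp \in \partial \DD$ and closing each contour inside the domain of
analyticity of $H$ (the right half-plane in $s$, the unit disk in $\ppp$)
yields $H(-\overline{\lambda_{k}}, 1/\overline{\pi_{\ell}})\, b_{k \ell}$;
identical manipulations on the right-hand side give
$\hH(-\overline{\lambda_{k}}, 1/\overline{\pi_{\ell}})\, b_{k \ell}$, proving
\eqref{eq:grimm-mimo-cond-1}. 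Pre-multiplying~\eqref{eq:cond-B} by
$(e_{k} \otimes e_{\ell})\tran (\hS \otimes \hS_{\ppp})^{-1}$ and arguing
symmetrically yields~\eqref{eq:grimm-mimo-cond-2}.

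The Hermite-type conditions~\eqref{eq:grimm-mimo-cond-3}
and~\eqref{eq:grimm-mimo-cond-4} require combining the four gradients
$\nabla_{\cAr_{i}} \obj$ from Theorem~\ref{thm:grad} via
Lemma~\ref{lem:kron-grad}, because $\cAr_{1}, \ldots, \cAr_{4}$
in~\eqref{eq:KronstROM} are not independent but share the factors
$\hE, \hA, \hE_{\ppp}, \hA_{\ppp}$. Focus on $\hA$, which enters
$\cAr_{3} = \hA \otimes \hE_{\ppp}$ and $\cAr_{4} = \hA \otimes \hA_{\ppp}$;
using the factorizations $\hE_{\ppp} = \hS_{\ppp}\mherm \hT_{\ppp}^{-1}$ and
$\hA_{\ppp} = (\hS_{\ppp}\mherm \Pi)\, \hT_{\ppp}^{-1}$ inherited from the
pole-residue diagonalization, Lemma~\ref{lem:kron-grad} inserts
$e_{j}\tran \hS_{\ppp}^{-1}$ from the left and $\hT_{\ppp}\mherm e_{j}$ from
the right of each $\nabla_{\cAr_{i}} \obj$ and puts an extra weight
$\overline{\pi_{j}}$ on the $\cAr_{4}$-term via
$e_{j}\tran \Pi\herm = \overline{\pi_{j}}\, e_{j}\tran$. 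After also
sandwiching $\nabla_{\hA} \obj = 0$ between $e_{k}\tran \hS^{-1}$ and
$\hT\mherm e_{k}$, the Kronecker structure of $\hX$ and $\hX_{d}$ collapses to
scalar rational factors so that each of $\hX_{d}$ and $\hX\herm$ contributes
the pole
$1/\bigl[(\overline{s} - \overline{\lambda_{k}})(\overline{\ppp} - \overline{\pi_{j}})\bigr]$
together with $c_{k j}\herm$ and $b_{k j}$ respectively. Crucially, the
weighted combination
$\car_{3}(\overline{\pp}) + \overline{\pi_{j}}\, \car_{4}(\overline{\pp})
= -\overline{\ppp} + \overline{\pi_{j}}
= -(\overline{\ppp} - \overline{\pi_{j}})$
cancels one power of $(\overline{\ppp} - \overline{\pi_{j}})$ in the
denominator, leaving a simple pole in $\ppp$ and a double pole in $s$. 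A
two-variable residue evaluation (the double pole in $s$ producing the
$s$-derivative at $-\overline{\lambda_{k}}$ and the $\ppp$-residue combined
with the $\dif{\ppp}/(\imag \ppp)$ factor from the unit-circle substitution
supplying the weight $1/\overline{\pi_{j}}$) yields
exactly~\eqref{eq:grimm-mimo-cond-3}. The symmetric argument applied to
$\nabla_{\hA_{\ppp}} \obj = 0$, using $\hE = \hS\mherm \hT^{-1}$,
$\hA = (\hS\mherm \Lambda)\, \hT^{-1}$ together with the second half of
Lemma~\ref{lem:kron-grad}, yields~\eqref{eq:grimm-mimo-cond-4}; no
$1/\overline{\lambda_{i}}$ weight appears there because
$\dif{s} = \imag\, \dif{\omega}$ carries no factor of $s$. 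Finally, the
gradient conditions from $\hE$ and $\hE_{\ppp}$ reduce to nonzero scalar
multiples of~\eqref{eq:grimm-mimo-cond-3} and~\eqref{eq:grimm-mimo-cond-4}
modulo the tangential conditions~\eqref{eq:grimm-mimo-cond-1}
and~\eqref{eq:grimm-mimo-cond-2}, so they supply no new information.

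The main obstacle is the Kronecker-algebra bookkeeping in the third paragraph:
one must simultaneously manage (i)~the chain-rule combination of two
$\nabla_{\cAr_{i}}$ per shared factor prescribed by Lemma~\ref{lem:kron-grad},
(ii)~the cancellation between the numerator $\car_{i}(\overline{\pp})$ factors
and the denominator pole structure coming from $\hX$ and $\hX_{d}$, and
(iii)~the iterated two-variable contour evaluation with the substitution
$\overline{\ppp} = 1/\ppp$ on the unit circle that supplies the
$1/\overline{\pi_{j}}$ weights. Once this bookkeeping is in place, the residue
evaluation in each variable reduces to the one-dimensional computation already
carried out in Section~\ref{sec:lti-h2}.
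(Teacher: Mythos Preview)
Your proposal is correct and follows essentially the same route as the paper: you recover the tangential conditions~\eqref{eq:grimm-mimo-cond-1}--\eqref{eq:grimm-mimo-cond-2} directly from~\eqref{eq:cond-C}--\eqref{eq:cond-B} via the Kronecker post-/pre-multiplications and two applications of the Cauchy integral formula, and you obtain the Hermite conditions~\eqref{eq:grimm-mimo-cond-3}--\eqref{eq:grimm-mimo-cond-4} by combining $\nabla_{\cAr_3}\obj$ and $\nabla_{\cAr_4}\obj$ (respectively $\nabla_{\cAr_2}\obj$ and $\nabla_{\cAr_4}\obj$) through Lemma~\ref{lem:kron-grad} with the same factorizations $\hE_{\ppp}=\hS_{\ppp}\mherm\hT_{\ppp}^{-1}$, $\hA_{\ppp}=(\hS_{\ppp}\mherm\Pi)\hT_{\ppp}^{-1}$ and the same cancellation $-\overline{\ppp}+\overline{\pi_j}=-(\overline{\ppp}-\overline{\pi_j})$ that the paper exploits. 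Your closing remark that the $\hE$- and $\hE_{\ppp}$-gradients are redundant is a small addendum the paper leaves implicit, but otherwise the arguments coincide.
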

\begin{proof}
  First observe that we recover the $\HtwoLtwo$ norm by setting
  $\pset = \imag \RR \times \partial\DD$,
  $\pp = (\imag \omega, e^{\imag \omega_{\ppp}})$,
  $\mu = \frac{1}{4 \pi^2} \lambda_{\imag \RR} \times \lambda_{\partial\DD}$,
  and $\yf(\pp) = H(s, \ppp)$
  in the definition of the $\Ltwo(\pset, \measure)$ norm.
  Also we have already shown that $\hH$ in~\eqref{eq:grimm-rom} is \iac{strom}
  as in~\eqref{eq:rom}
  with $\xr(\pp) = \hX(s, \ppp)$, $\yr(\pp) = \hH(s, \ppp)$, and
  the Kronecker structure~\eqref{eq:KronstROM}.
  We start by postmultiplying the left-hand side of the first
  condition~\eqref{eq:cond-C} in \Cref{cor:cond} by
  $\myparen{\hT \otimes \hT_{\ppp}}\mherm (e_k \otimes e_{\ell})$
  to obtain
  \begin{align*}
    &
      \int_{\pset} \yf(\pp) \xr(\pp)\herm \difm{\pp}
      \, \myparen*{\hT \otimes \hT_{\ppp}}\mherm (e_k \otimes e_{\ell}) \\*
    & =
      \frac{1}{4 \pi^2}
      \int_0^{2 \pi}
      \int_{-\infty}^{\infty}
      \frac{H(\imag \omega, e^{\imag \omega_{\ppp}}) b_{k \ell}}{
        \myparen*{-\imag \omega - \overline{\lambda_k}}
        \myparen*{e^{-\imag \omega_{\ppp}} - \overline{\pi_{\ell}}}
      }
      \dif{\omega}
      \dif{\omega_{\ppp}}.
  \end{align*}
  Substituting
  $s = \imag \omega$,
  $\dif{s} = \imag \dif{\omega}$,
  $\ppp = e^{\imag \omega_{\ppp}}$, and
  $\dif{\ppp} = \imag \ppp \dif{\omega_{\ppp}}$,
  we find
  \begin{align*}
    &
      \int_{\pset} \yf(\pp) \xr(\pp)\herm \difm{\pp}
      \, \myparen*{\hT \otimes \hT_{\ppp}}\mherm (e_k \otimes e_{\ell})
      =
      \frac{1}{4 \pi^2 \imag^2}
      \oint_{\partial\DD}
      \oint_{\imag \RR}
      \frac{\frac{1}{\ppp} H(s, \ppp) b_{k \ell}}{
        \myparen*{-s - \overline{\lambda_k}}
        \myparen*{\frac{1}{\ppp} - \overline{\pi_{\ell}}}
      }
      \dif{s}
      \dif{\ppp} \\
    & =
      \frac{1}{4 \pi^2 \imag^2 \overline{\pi_{\ell}}}
      \oint_{\partial\DD}
      \oint_{\imag \RR}
      \frac{H(s, \ppp) b_{k \ell}}{
        \myparen*{s - \myparen*{-\overline{\lambda_k}}}
        \myparen*{\ppp - \frac{1}{\overline{\pi_{\ell}}}}
      }
      \dif{s}
      \dif{\ppp}
      % =
      % -\frac{1}{2 \pi \imag \overline{\pi_{\ell}}}
      % \oint_{\partial\DD}
      % \frac{H\myparen*{-\overline{\lambda_k}, \ppp} b_{k \ell}}{
      %   \ppp - \frac{1}{\overline{\pi_{\ell}}}
      % }
      % \dif{\ppp} \\
      =
      -\frac{1}{\overline{\pi_{\ell}}}
      H\myparen*{-\overline{\lambda_k}, \frac{1}{\overline{\pi_{\ell}}}}
      b_{k \ell},
  \end{align*}
  by using the Cauchy integral formula twice in the last equality.
  Performing similar operations to the right-hand side of~\eqref{eq:cond-C},
  we obtain the condition~\eqref{eq:grimm-mimo-cond-1}.
  Similarly,
  premultiplying the left-hand side of~\eqref{eq:cond-B} by
  $(e_k \otimes e_{\ell})\tran \myparen{\hS \otimes \hS_{\ppp}}^{-1}$,
  we obtain
  \begin{align*}
    (e_k \otimes e_{\ell})\tran \myparen*{\hS \otimes \hS_{\ppp}}^{-1}
    \int_{\pset} \xr_d(\pp) \yf(\pp) \difm{\pp}
    =
    -\frac{1}{\overline{\pi_{\ell}}}
    c_{k \ell}\herm
    H\myparen*{-\overline{\lambda_k}, \frac{1}{\overline{\pi_{\ell}}}}.
  \end{align*}
  Doing the same for the right-hand side of~\eqref{eq:cond-B},
  we obtain the condition~\eqref{eq:grimm-mimo-cond-2}.

  Similar to recovering the bitangential Hermite condition for
  $\Htwo$-optimality~\eqref{eq:h2-cond-a}
  where we used the gradient of the squared $\Htwo$ error with respect to $\hA$,
  in this setting, we need to differentiate the squared $\HtwoLtwo$
  error~\eqref{eq:grimm-obj} with respect to $\hA$ and $\hA_\ppp$.
  We start by computing the gradient with respect to $\hA$.
  Using \Cref{lem:kron-grad} with
  $\hE_{\ppp} = \hS_{\ppp}\mherm \hT_{\ppp}^{-1}$ and
  $\hA_{\ppp} = (\hS_{\ppp}\mherm \Pi) \hT_{\ppp}^{-1}$,
  we see that
  \begin{align*}
    &
      \frac{1}{2}
      \nabla_{\hA} \normHtwoLtwo*{H - \hH}^2 \\*
    & =
      \sum_{j = 1}^{\nrom_{\ppp}}
      \myparen*{I_{\nrom} \otimes e_j\tran \hS_{\ppp}^{-1}}
      \int_{\pset}
      \car_3(\overline{\pp})
      \xrd(\pp)
      \mybrack*{\yf(\pp) - \yr(\pp)}
      \xr(\pp)\herm
      \difm{\pp}
      \myparen*{I_{\nrom} \otimes \hT_{\ppp}\mherm e_j} \\*
    & \quad
      + \sum_{j = 1}^{\nrom_{\ppp}}
      \myparen*{I_{\nrom} \otimes e_j\tran \Pi\herm \hS_{\ppp}^{-1}}
      \int_{\pset}
      \car_4(\overline{\pp})
      \xrd(\pp)
      \mybrack*{\yf(\pp) - \yr(\pp)}
      \xr(\pp)\herm
      \difm{\pp}
      \myparen*{I_{\nrom} \otimes \hT_{\ppp}\mherm e_j} \\
    & =
      -\sum_{j = 1}^{\nrom_{\ppp}}
      \int_{\pset}
      \myparen*{\overline{\ppp} - \overline{\pi_j}}
      \myparen*{I_{\nrom} \otimes e_j\tran \hS_{\ppp}^{-1}}
      \xrd(\pp)
      \mybrack*{\yf(\pp) - \yr(\pp)}
      \xr(\pp)\herm
      \myparen*{I_{\nrom} \otimes \hT_{\ppp}\mherm e_j}
      \difm{\pp}.
  \end{align*}
  Then,
  \begin{align*}
    &
      \frac{1}{2}
      \myparen*{e_k\tran \hS^{-1} \otimes I_{\nrom_{\ppp}}}
      \nabla_{\hA} \normHtwoLtwo*{H - \hH}^2
      \myparen*{\hT\mherm e_k \otimes I_{\nrom_{\ppp}}} \\*
    % & =
    %   -\sum_{j = 1}^{\nrom_{\ppp}}
    %   \int_{\pset}
    %   \myparen*{\overline{\ppp} - \overline{\pi_j}}
    %   \myparen*{e_k\tran \hS^{-1} \otimes e_j\tran \hS_{\ppp}^{-1}}
    %   \xrd(\pp)
    %   \mybrack*{\yf(\pp) - \yr(\pp)}
    %   \xr(\pp)\herm
    %   \myparen*{\hT\mherm e_k \otimes \hT_{\ppp}\mherm e_j}
    %   \difm{\pp} \\
    & =
      -\frac{1}{4 \pi^2}
      \sum_{j = 1}^{\nrom_{\ppp}}
      \int_0^{2 \pi}
      \int_{-\infty}^{\infty}
      \myparen*{e^{-\imag \omega_{\ppp}} - \overline{\pi_j}}
      \frac{
        c_{k j}\herm
        \myparen*{
          H(\imag \omega, e^{\imag \omega_{\ppp}})
          - \hH(\imag \omega, e^{\imag \omega_{\ppp}})
        }
        b_{k j}
      }{
        \myparen*{-\imag \omega - \overline{\lambda_k}}^2
        \myparen*{e^{-\imag \omega_{\ppp}} - \overline{\pi_j}}^2
      }
      \dif{\omega}
      \dif{\omega_{\ppp}} \\
    & =
      -\frac{1}{4 \pi^2}
      \sum_{j = 1}^{\nrom_{\ppp}}
      \int_0^{2 \pi}
      \int_{-\infty}^{\infty}
      \frac{
        c_{k j}\herm
        \myparen*{
          H(\imag \omega, e^{\imag \omega_{\ppp}})
          - \hH(\imag \omega, e^{\imag \omega_{\ppp}})
        }
        b_{k j}
      }{
        \myparen*{-\imag \omega - \overline{\lambda_k}}^2
        \myparen*{e^{-\imag \omega_{\ppp}} - \overline{\pi_j}}
      }
      \dif{\omega}
      \dif{\omega_{\ppp}} \\
    % & =
    %   -\frac{1}{4 \pi^2 \imag^2}
    %   \sum_{j = 1}^{\nrom_{\ppp}}
    %   \oint_{\partial\DD}
    %   \oint_{\imag \RR}
    %   \frac{
    %     \frac{1}{\ppp}
    %     c_{k j}\herm
    %     \myparen*{H(s, \ppp) - \hH(s, \ppp)}
    %     b_{k j}
    %   }{
    %     \myparen*{-s - \overline{\lambda_k}}^2
    %     \myparen*{\frac{1}{\ppp} - \overline{\pi_j}}
    %   }
    %   \dif{s}
    %   \dif{\ppp} \\
    & =
      \frac{1}{4 \pi^2 \imag^2}
      \sum_{j = 1}^{\nrom_{\ppp}}
      \frac{1}{\overline{\pi_j}}
      \oint_{\partial\DD}
      \oint_{\imag \RR}
      \frac{
        c_{k j}\herm
        \myparen*{H(s, \ppp) - \hH(s, \ppp)}
        b_{k j}
      }{
        \myparen*{s - \myparen*{-\overline{\lambda_k}}}^2
        \myparen*{\ppp - \frac{1}{\overline{\pi_j}}}
      }
      \dif{s}
      \dif{\ppp} \\
    % & =
    %   -\frac{1}{2 \pi \imag}
    %   \sum_{j = 1}^{\nrom_{\ppp}}
    %   \frac{1}{\overline{\pi_j}}
    %   \oint_{\partial\DD}
    %   \frac{
    %     c_{k j}\herm
    %     \myparen*{
    %       \frac{\partial H}{\partial s}\myparen*{-\overline{\lambda_k}, \ppp}
    %       - \frac{\partial \hH}{\partial s}\myparen*{-\overline{\lambda_k},
    %         \ppp}
    %     }
    %     b_{k j}
    %   }{
    %     \ppp - \frac{1}{\overline{\pi_j}}
    %   }
    %   \dif{\ppp} \\
    & =
      -\sum_{j = 1}^{\nrom_{\ppp}}
      \frac{1}{\overline{\pi_j}}
      c_{k j}\herm
      \myparen*{
        \frac{\partial H}{\partial s}\myparen*{-\overline{\lambda_k},
          \frac{1}{\overline{\pi_j}}}
        - \frac{\partial \hH}{\partial s}\myparen*{-\overline{\lambda_k},
          \frac{1}{\overline{\pi_j}}}
      }
      b_{k j},
  \end{align*}
  where we used the Cauchy integral formula twice in the last equality.
  Setting this equation equal to zero gives the
  condition~\eqref{eq:grimm-mimo-cond-3}.

  Lastly, computing the gradient with respect to $\hA_{\ppp}$
  using \Cref{lem:kron-grad} with
  $\hE = \hS\mherm \hT^{-1}$ and
  $\hA = (\hS\mherm \Lambda) \hT^{-1}$,
  we obtain (similar to the gradient with respect to $\hA$)
  \begin{align*}
    &
      \frac{1}{2}
      \nabla_{\hA_{\ppp}} \normHtwoLtwo*{H - \hH}^2 \\*
    % & =
    %   \sum_{i = 1}^{\nrom}
    %   \myparen*{e_i\tran \hS^{-1} \otimes I_{\nrom_{\ppp}}}
    %   \int_{\pset}
    %   \car_2(\overline{\pp})
    %   \xrd(\pp)
    %   \mybrack*{\yf(\pp) - \yr(\pp)}
    %   \xr(\pp)\herm
    %   \difm{\pp}
    %   \myparen*{\hT\mherm e_i \otimes I_{\nrom_{\ppp}}} \\*
    % & \quad
    %   + \sum_{i = 1}^{\nrom}
    %   \myparen*{e_i\tran \Lambda\herm \hS^{-1} \otimes I_{\nrom_{\ppp}}}
    %   \int_{\pset}
    %   \car_4(\overline{\pp})
    %   \xrd(\pp)
    %   \mybrack*{\yf(\pp) - \yr(\pp)}
    %   \xr(\pp)\herm
    %   \difm{\pp}
    %   \myparen*{\hT\mherm e_i \otimes I_{\nrom_{\ppp}}} \\*
    & =
      -\sum_{i = 1}^{\nrom}
      \int_{\pset}
      \myparen*{\overline{s} - \overline{\lambda_i}}
      \myparen*{e_i\tran \hS^{-1} \otimes I_{\nrom_{\ppp}}}
      \xrd(\pp)
      \mybrack*{\yf(\pp) - \yr(\pp)}
      \xr(\pp)\herm
      \myparen*{\hT\mherm e_i \otimes I_{\nrom_{\ppp}}}
      \difm{\pp}
  \end{align*}
  and
  \begin{align*}
    &
      \frac{1}{2}
      \myparen*{I_{\nrom} \otimes e_{\ell}\tran \hS_{\ppp}^{-1}}
      \nabla_{\hA_{\ppp}} \normHtwoLtwo*{H - \hH}^2
      \myparen*{I_{\nrom} \otimes \hT_{\ppp}\mherm e_{\ell}} \\*
    % & =
    %   -
    %   \sum_{i = 1}^{\nrom}
    %   \int_{\pset}
    %   \myparen*{\overline{s} - \overline{\lambda_i}}
    %   \myparen*{e_i\tran \hS^{-1} \otimes e_{\ell}\tran \hS_{\ppp}^{-1}}
    %   \xrd(\pp)
    %   \mybrack*{\yf(\pp) - \yr(\pp)}
    %   \xr(\pp)\herm
    %   \myparen*{\hT\mherm e_i \otimes \hT_{\ppp}\mherm e_{\ell}}
    %   \difm{\pp} \\
    % & =
    %   -\frac{1}{4 \pi^2}
    %   \sum_{i = 1}^{\nrom}
    %   \int_0^{2 \pi}
    %   \int_{-\infty}^{\infty}
    %   \myparen*{-\imag \omega - \overline{\lambda_i}}
    %   \frac{
    %     c_{i \ell}\herm
    %     \myparen*{
    %       H(\imag \omega, e^{\imag \omega_{\ppp}})
    %       - \hH(\imag \omega, e^{\imag \omega_{\ppp}})
    %     }
    %     b_{i \ell}
    %   }{
    %     \myparen*{-\imag \omega - \overline{\lambda_i}}^2
    %     \myparen*{e^{-\imag \omega_{\ppp}} - \overline{\pi_{\ell}}}^2
    %   }
    %   \dif{\omega}
    %   \dif{\omega_{\ppp}} \\
    & =
      -\frac{1}{4 \pi^2}
      \sum_{i = 1}^{\nrom}
      \int_0^{2 \pi}
      \int_{-\infty}^{\infty}
      \frac{
        c_{i \ell}\herm
        \myparen*{
          H(\imag \omega, e^{\imag \omega_{\ppp}})
          - \hH(\imag \omega, e^{\imag \omega_{\ppp}})
        }
        b_{i \ell}
      }{
        \myparen*{-\imag \omega - \overline{\lambda_i}}
        \myparen*{e^{-\imag \omega_{\ppp}} - \overline{\pi_{\ell}}}^2
      }
      \dif{\omega}
      \dif{\omega_{\ppp}} \\
    % & =
    %   -\frac{1}{4 \pi^2 \imag^2}
    %   \sum_{i = 1}^{\nrom}
    %   \oint_{\partial\DD}
    %   \oint_{\imag \RR}
    %   \frac{
    %     \frac{1}{\ppp}
    %     c_{i \ell}\herm
    %     \myparen*{H(s, \ppp) - \hH(s, \ppp)}
    %     b_{i \ell}
    %   }{
    %     \myparen*{-s - \overline{\lambda_i}}
    %     \myparen*{\frac{1}{\ppp} - \overline{\pi_{\ell}}}^2
    %   }
    %   \dif{s}
    %   \dif{\ppp} \\
    & =
      \frac{1}{4 \pi^2 \imag^2}
      \sum_{i = 1}^{\nrom}
      \frac{1}{\overline{\pi_{\ell}}^2}
      \oint_{\partial\DD}
      \oint_{\imag \RR}
      \frac{
        \ppp
        c_{i \ell}\herm
        \myparen*{H(s, \ppp) - \hH(s, \ppp)}
        b_{i \ell}
      }{
        \myparen*{s - \myparen*{-\overline{\lambda_i}}}
        \myparen*{\ppp - \frac{1}{\overline{\pi_{\ell}}}}^2
      }
      \dif{s}
      \dif{\ppp} \\
    % & =
    %   -\frac{1}{2 \pi \imag}
    %   \sum_{i = 1}^{\nrom}
    %   \frac{1}{\overline{\pi_{\ell}}^2}
    %   \oint_{\partial\DD}
    %   \frac{
    %     \ppp
    %     c_{i \ell}\herm
    %     \myparen*{
    %       H\myparen*{-\overline{\lambda_i}, \ppp}
    %       - \hH\myparen*{-\overline{\lambda_i}, \ppp}
    %     }
    %     b_{i \ell}
    %   }{
    %     \myparen*{\ppp - \frac{1}{\overline{\pi_{\ell}}}}^2
    %   }
    %   \dif{\ppp} \\
    & =
      -\sum_{i = 1}^{\nrom}
      \frac{1}{\overline{\pi_{\ell}}^3}
      c_{i \ell}\herm
      \myparen*{
        \frac{\partial H}{\partial \ppp}\myparen*{-\overline{\lambda_i},
          \frac{1}{\overline{\pi_{\ell}}}}
        - \frac{\partial \hH}{\partial \ppp}\myparen*{-\overline{\lambda_i},
          \frac{1}{\overline{\pi_{\ell}}}}
      }
      b_{i \ell},
  \end{align*}
  which gives the final condition~\eqref{eq:grimm-mimo-cond-4}.
\end{proof}

Therefore, using \Cref{cor:cond} we are not only able recover the optimality
conditions~\eqref{eq:grimm-cond} for single-input single-output systems,
but also generalize them to systems with multiple inputs and outputs.
One can see parallels to the \ac{lti} case considered in \Cref{sec:lti-h2}.
The first two conditions~\eqref{eq:grimm-mimo-cond-1}
and~\eqref{eq:grimm-mimo-cond-2} are analogous to the left- and right-tangential
(Lagrange) interpolation conditions of~\eqref{eq:h2-cond-c}
and~\eqref{eq:h2-cond-b}, respectively.
Furthermore,~\eqref{eq:grimm-mimo-cond-3} and~\eqref{eq:grimm-mimo-cond-4}
resemble the bitangential Hermite conditions in~\eqref{eq:h2-cond-a}.
However, since we have a multivariate function $H$ in the $\HtwoLtwo$ case,
ordinary derivatives with respect to $s$ are now replaced by partial derivatives
respect to $s$ and $\ppp$.
And a bigger distinction is that in the Hermite
conditions~\eqref{eq:grimm-mimo-cond-3} and~\eqref{eq:grimm-mimo-cond-4},
interpolated function is not simply a derivative of $H$.
Rather a weighted sum of partial derivatives are interpolated.
Optimal interpolation points still result from \emph{mirroring} of the poles.
While the mirroring of $s$-poles is done with respect to the imaginary axis,
the mirroring of $\ppp$-poles is with respect to the unit circle.
This is not surprising since we used $\pset = \imag \RR \times \partial\DD$.
Therefore, the mirroring of $s$-poles resembles the continuous-time $\Htwo$
conditions~\eqref{eq:h2-cond} and
the mirroring of $\ppp$-poles resembles the discrete-time $\htwo$
conditions~\eqref{eq:h2-cond-dt}.

\section{Linear Time-invariant Systems: Discrete Measure}%
\label{sec:lti-ls}
In \Cref{sec:lti-h2}, we focused on $\Ltwo$-optimal modeling of \ac{lti}
systems using a continuous $\Ltwo$ measure in the frequency domain leading to
various systems-theoretic norms such as the $\Htwo$ norm.
In this section, we change our focus to a discrete measure and investigate the
resulting discrete \ac{ls} problem.

\subsection{Necessary Conditions for Discrete LS Problem}
Let $H$ be the transfer function of a continuous-time \ac{lti} system
(e.g., as in~\eqref{eq:lti-tf}).
Assume we only have access to the samples of $H$ at the sampling frequencies
$\{\imag \omega_i\}_{i = 1}^N$ where $\omega_i \in \RR$.
Let $H_i = H(\imag \omega_i) \in \CCoi$, for $i = 1, 2, \ldots, N$,
denote the corresponding frequency response data.
We assume that the sampling frequencies are closed under conjugation;
i.e., if $\imag \omega_k$ is a sampling point, then so is $-\imag \omega_k$.
In most cases, as in~\eqref{eq:lti-tf}, $H$ has a real state-space realization,
thus leading to $H(-\imag \omega_k) = \overline{H(\imag \omega_k)}$.
Therefore, the frequency response data $\{(\imag \omega_i, H_i)\}_{i = 1}^N$ is
closed under conjugation.

Given the sampling data $\{(\imag \omega_i, H_i)\}_{i = 1}^N$, the goal is to
find \iac{rom}~\eqref{eq:lti-cont-rom} with transfer function
$\hH(s) = \hC (s \hE - \hA)^{-1} \hB$ that minimizes the \ac{ls} error
\begin{align}\label{eq:lti-ls-obj}
  \obj(\hH)
  = \sum_{i = 1}^N \rho_i \normF*{H_i - \hH(\imag \omega_i)}^2,
\end{align}
where $\rho_i > 0$ are the weights
(equal for complex conjugate pairs of sampling frequencies).
We note that the \ac{ls} error in~\eqref{eq:lti-ls-obj} is a special case of the
$\Ltwo$ error~\eqref{eq:l2-cost} with the choices of
$\pset = \{\imag \omega_i\}_{i = 1}^N$,
$\yf(\imag \omega_i) = H_i$,
$\yr = \hH$, and
$\measure = \sum_{i = 1}^N \rho_i \delta_{\imag \omega_i}$,
where $\delta_{\imag \omega_i}$ is the Dirac measure at $\imag \omega_i$.
Thus, the rational \ac{ls} minimization problem~\eqref{eq:lti-ls-obj} directly
fits under our $\Ltwo$-optimal reduced-order modeling framework.

In~\cite{MliG22}, we have already considered this problem, i.e.,
the problem of approximating \ac{lti} systems from their frequency-domain data
using the discrete \ac{ls} measure.
We have devised a gradient-based optimization algorithm to minimize the
 \ac{ls} cost~\eqref{eq:lti-ls-obj}.
Our goal here is not algorithmic.
Here, using \Cref{cor:cond}, we derive new interpolatory necessary
conditions for \ac{ls} reduced-order modeling,
the first such conditions to the best of our knowledge, for the rational \ac{ls}
minimization problem.

Rational \ac{ls} fitting problem, i.e.,
minimizing the \ac{ls} cost using a rational function,
is an important and widely studied problem and there are various approaches to
tackling it, see,
e.g.,~\cite{GusS99,DrmGB15,DrmGB15a,HokM20,NakST18,BerG17},
and the references therein.
What we show here is that regardless of the underlying numerical algorithm,
a solution of the nonlinear rational \ac{ls} minimization problem is
interpolatory and satisfies specific bitangential Hermite interpolation
conditions.

\begin{theorem}
  Given the sampling data $\{(\imag \omega_i, H_i)\}_{i = 1}^N$,
  let the \ac{strom} $\hH(s) = \hC (s \hE - \hA)^{-1} \hB$ having the
  pole-residue form
  $\hH(s) = \sum_{j = 1}^{\nrom} \frac{c_j b_j\herm}{s - \lambda_j}$
  with pairwise distinct poles
  be a local minimum of $\obj$~\eqref{eq:lti-ls-obj}.
  Then
  \begin{subequations}\label{eq:lti-ls-cond}
    \begin{align}
      \label{eq:lti-ls-cond-1}
      \sum_{i = 1}^N
      \rho_i
      \frac{H_i b_k}{-\imag \omega_i - \overline{\lambda_k}}
      & =
        \sum_{i = 1}^N
        \rho_i
        \frac{\hH(\imag \omega_i) b_k}{
        -\imag \omega_i - \overline{\lambda_k}}, \\
      \label{eq:lti-ls-cond-2}
      \sum_{i = 1}^N
      \rho_i
      \frac{c_k\herm H_i}{-\imag \omega_i - \overline{\lambda_k}}
      & =
        \sum_{i = 1}^N
        \rho_i
        \frac{c_k\herm \hH(\imag \omega_i)}{
        -\imag \omega_i - \overline{\lambda_k}}, \\
      \label{eq:lti-ls-cond-3}
      \sum_{i = 1}^N
      \rho_i
      \frac{c_k\herm H_i b_k}{
      \myparen*{-\imag \omega_i - \overline{\lambda_k}}^2}
      & =
        \sum_{i = 1}^N
        \rho_i
        \frac{c_k\herm \hH(\imag \omega_i) b_k}{
        \myparen*{-\imag \omega_i - \overline{\lambda_k}}^2},
    \end{align}
  \end{subequations}
  for $k = 1, 2, \ldots, \nrom$.
\end{theorem}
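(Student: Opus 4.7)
The plan is to recognize this theorem as a direct specialization of \Cref{cor:cond} to the discrete measure setting, then to translate the general conditions~\eqref{eq:cond} into the pole-residue formulation used in the statement, following the same template that recovered the $\Htwo$ conditions in \Cref{sec:lti-h2}. The key observation is that by choosing $\pset = \{\imag \omega_i\}_{i = 1}^N$ and $\measure = \sum_{i = 1}^N \rho_i \delta_{\imag \omega_i}$, all integrals in~\eqref{eq:cond-C}--\eqref{eq:cond-A} collapse to finite weighted sums, so no contour-integral or Cauchy-formula reduction is needed. The absence of this reduction step is precisely what causes the final conditions~\eqref{eq:lti-ls-cond} to retain a weighted-sum form rather than reducing to pointwise Hermite interpolation as in the $\Htwo$ case.

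First I would cast $\hH$ as \iac{strom} exactly as in \Cref{sec:lti-h2}: take $\qAr = 2$ with $\car_1(\pp) = \pp$, $\car_2(\pp) = -1$, $\cAr_1 = \hE$, $\cAr_2 = \hA$, and $\qBr = \qCr = 1$ with $\cbr_1 \equiv 1$, $\ccr_1 \equiv 1$, so that $\xr(\imag \omega_i) = (\imag \omega_i \hE - \hA)^{-1} \hB$ and $\xrd(\imag \omega_i) = (\imag \omega_i \hE - \hA)\mherm \hC\tran$. Then I would verify \Cref{assumption:all}: the sampling set is finite and closed under conjugation by hypothesis, the weights $\rho_i$ are equal for conjugate pairs, and item~\ref{item:inv} holds provided $\lambda_j \neq \imag \omega_i$ for all $i, j$, which is implicit since a local minimum must have finite cost. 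Since a local minimum is a critical point, \Cref{cor:cond} applies.

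Next I would diagonalize as in~\eqref{eq:pole-res-form}: choose $\hT, \hS$ invertible with $\hS\herm \hE \hT = I$ and $\hS\herm \hA \hT = \Lambda = \mydiag{\lambda_1, \ldots, \lambda_\nrom}$, so that $(\imag \omega_i \hE - \hA)^{-1} = \hT (\imag \omega_i I - \Lambda)^{-1} \hS\herm$, with $c_j = \hC \hT e_j$ and $b_j = \hB\tran \hS e_j$. Substituting into~\eqref{eq:cond-C} with the discrete measure gives
\begin{equation*}
  \sum_{i = 1}^N \rho_i H_i \hB\tran (\imag \omega_i \hE - \hA)\mherm
  =
  \sum_{i = 1}^N \rho_i \hH(\imag \omega_i) \hB\tran (\imag \omega_i \hE - \hA)\mherm.
\end{equation*}
Postmultiplying both sides by $\hT\mherm e_k$ extracts the $k$-th column in the transformed coordinates, turning $\hB\tran (\imag \omega_i \hE - \hA)\mherm \hT\mherm e_k$ into $\tfrac{b_k}{-\imag \omega_i - \overline{\lambda_k}}$, which yields~\eqref{eq:lti-ls-cond-1}. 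The condition~\eqref{eq:lti-ls-cond-2} is obtained analogously from~\eqref{eq:cond-B} by premultiplying with $e_k\tran \hS^{-1}$, and~\eqref{eq:lti-ls-cond-3} comes from~\eqref{eq:cond-A} with $\car_2 = -1$ by both premultiplying with $e_k\tran \hS^{-1}$ and postmultiplying with $\hT\mherm e_k$; the squared denominator in~\eqref{eq:lti-ls-cond-3} arises because both $\xrd$ and $\xr\herm$ contribute a factor of $(-\imag \omega_i - \overline{\lambda_k})^{-1}$ after the transformation.

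The main obstacle is conceptual rather than computational: unlike the $\Htwo$ case, one cannot collapse the sums into pointwise evaluations of $H$ at $-\overline{\lambda_k}$ because $-\overline{\lambda_k}$ generally lies outside $\{\imag \omega_i\}$ and no residue theorem is available for a discrete measure. I would therefore emphasize in the final write-up that the resulting conditions should still be interpreted as bitangential Hermite interpolation, but in a weighted, discrete-measure sense: they exactly reproduce the frequency-domain inner products of $H$ and $\hH$ against the reduced-order resolvent kernels $(-\imag \omega_i - \overline{\lambda_k})^{-1}$ and their squares, tangentially along $b_k$ and $c_k$. Beyond this, the remaining work is purely algebraic linear algebra of the type already carried out in \Cref{sec:lti-h2}, and I expect no further technical difficulty.
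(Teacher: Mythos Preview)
Your proposal is correct and follows essentially the same approach as the paper: specialize \Cref{cor:cond} to the discrete measure $\measure = \sum_{i=1}^N \rho_i \delta_{\imag \omega_i}$ so that the integrals in~\eqref{eq:cond} become finite sums, then use the diagonalizing transformations $\hT, \hS$ from the pole-residue form to pre- and postmultiply the conditions by $e_k\tran \hS^{-1}$ and $\hT\mherm e_k$, exactly as in \Cref{sec:lti-h2}. Your write-up is in fact slightly more detailed than the paper's, since you explicitly check \Cref{assumption:all} and explain why no Cauchy-integral step is available or needed.
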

\begin{proof}
  Let $\hH(s) = \hC (s \hE - \hA)^{-1} \hB$ and
  $\hT, \hS \in \CCrr$ be invertible matrices,
  with $c_j = \hC \hS^{-1} e_j$ and $b_j = \hB\tran \hT\mherm e_j$,
  as in \Cref{sec:lti-h2}, yielding the pole-residue form of $\hH$.
  Then, using
  $\pset = \{\imag \omega_i\}_{i = 1}^N$,
  $\yf(\imag \omega_i) = H_i$,
  $\yr = \hH$,
  $\xr = \hX$, and
  $\measure = \sum_{i = 1}^N \rho_i \delta_{\imag \omega_i}$
  in the left-hand side term in the $\Ltwo$-optimality
  condition~\eqref{eq:cond-C} gives
  \begin{align*}
    \int_{\pset} \yf(\pp) \xr(\pp)\herm \difm{\pp} \, \hT\mherm e_k
    & =
      \sum_{i = 1}^N
      \rho_i
      H_i
      \hB\tran \myparen*{\imag \omega_i \hE - \hA}\mherm
      \hT\mherm e_k
      =
      \sum_{i = 1}^N
      \rho_i
      \frac{H_i b_k}{-\imag \omega_i - \overline{\lambda_k}}.
  \end{align*}
  Similarly, using the left-hand side term in~\eqref{eq:cond-B}, we obtain
  \begin{align*}
    e_k\tran \hS^{-1} \int_{\pset} \xr_d(\pp) \yf(\pp) \difm{\pp}
    & =
      \sum_{i = 1}^N
      \rho_i
      e_k\tran \hS^{-1}
      \myparen*{\imag \omega_i \hE - \hA}\mherm \hC\tran
      H_i
      =
      \sum_{i = 1}^N
      \rho_i
      \frac{c_k\herm H_i}{-\imag \omega_i - \overline{\lambda_k}}.
  \end{align*}
  Lastly, the left-hand side term in~\eqref{eq:cond-A} corresponding to $\hA$
  shows
  \begin{align*}
    &
      {-e_k\tran} \hS^{-1}
      \int_{\pset} \car_2(\pp) \xr_d(\pp) \yf(\pp) \xr(\pp)\herm \difm{\pp}
      \, \hT\mherm e_k \\*
    & =
      \sum_{i = 1}^N
      \rho_i
      e_k\tran \hS^{-1}
      \myparen*{\imag \omega_i \hE - \hA}\mherm \hC\tran
      H_i
      \hB\tran \myparen*{\imag \omega_i \hE - \hA}\mherm
      \hT\mherm e_k
      =
      \sum_{i = 1}^N
      \rho_i
      \frac{c_k\herm H_i b_k}{
      \myparen*{-\imag \omega_i - \overline{\lambda_k}}^2}.
  \end{align*}
  Analogous calculations for the right-hand sides directly gives the
  conditions~\eqref{eq:lti-ls-cond}.
\end{proof}

We can rewrite the conditions~\eqref{eq:lti-ls-cond} to give a more immediate
interpolatory interpretation.
\begin{corollary}\label{cor:lti-ls-G}
  Given the sampling data $\{(\imag \omega_i, H_i)\}_{i = 1}^N$,
  let $\hH(s) = \sum_{j = 1}^{\nrom} \frac{c_j b_j\herm}{s - \lambda_j}$ have
  pairwise distinct poles and
  be a local minimizer of the \ac{ls} error~\eqref{eq:lti-ls-obj}.
  Furthermore, define the transfer functions
  \begin{equation}\label{eq:GandGhat}
    G(s) = \sum_{i = 1}^N \rho_i \frac{H_i}{s - \imag \omega_i}
    \quad \textnormal{and} \quad
    \hG(s) = \sum_{i = 1}^N \rho_i
    \frac{\hH(\imag \omega_i)}{s - \imag \omega_i}.
  \end{equation}
  Then
  \begin{subequations}\label{eq:ls-interp}
    \begin{align}
      \label{eq:ls-interp-1}
      G\myparen*{-\overline{\lambda_k}} b_k
      & =
        \hG\myparen*{-\overline{\lambda_k}} b_k, \\
      \label{eq:ls-interp-2}
      c_k\herm G\myparen*{-\overline{\lambda_k}}
      & =
        c_k\herm \hG\myparen*{-\overline{\lambda_k}}, \\
      \label{eq:ls-interp-3}
      c_k\herm G'\myparen*{-\overline{\lambda_k}} b_k
      & =
        c_k\herm \hG'\myparen*{-\overline{\lambda_k}} b_k,
    \end{align}
  \end{subequations}
  for $k = 1, 2, \ldots, \nrom$.
\end{corollary}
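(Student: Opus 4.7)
The plan is to observe that this corollary is essentially an algebraic reformulation of \eqref{eq:lti-ls-cond}: once $G$ and $\hat{G}$ are evaluated at $s = -\overline{\lambda_k}$, the resulting rational sums coincide term-by-term with those appearing in \eqref{eq:lti-ls-cond}. The key trivial identity is $-\overline{\lambda_k} - \imag \omega_i = -\imag \omega_i - \overline{\lambda_k}$, which converts denominators in the definitions of $G, \hat{G}$ into the denominators already present in the discrete LS conditions. So the proof is essentially a substitution, with no integral manipulations or contour arguments required.

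First I would substitute $s = -\overline{\lambda_k}$ into the definition of $G$ in~\eqref{eq:GandGhat} to get
\[
  G\myparen*{-\overline{\lambda_k}}
  = \sum_{i=1}^N \rho_i \frac{H_i}{-\overline{\lambda_k} - \imag \omega_i}
  = \sum_{i=1}^N \rho_i \frac{H_i}{-\imag \omega_i - \overline{\lambda_k}},
\]
and the analogous expression for $\hat{G}(-\overline{\lambda_k})$ with $H_i$ replaced by $\hat{H}(\imag \omega_i)$. Right-multiplying by $b_k$ and comparing with \eqref{eq:lti-ls-cond-1} yields \eqref{eq:ls-interp-1} immediately. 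Left-multiplying by $c_k^\ast$ and comparing with \eqref{eq:lti-ls-cond-2} yields \eqref{eq:ls-interp-2}.

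For the Hermite-type condition \eqref{eq:ls-interp-3}, I would differentiate $G$ and $\hat{G}$ termwise, obtaining
\[
  G'(s) = -\sum_{i=1}^N \rho_i \frac{H_i}{(s - \imag \omega_i)^2},
  \qquad
  \hat{G}'(s) = -\sum_{i=1}^N \rho_i \frac{\hat{H}(\imag \omega_i)}{(s - \imag \omega_i)^2}.
\]
Evaluating at $s = -\overline{\lambda_k}$, using $(-\overline{\lambda_k} - \imag \omega_i)^2 = (-\imag \omega_i - \overline{\lambda_k})^2$, and sandwiching between $c_k^\ast$ and $b_k$, the sign $-1$ appears on both sides and cancels, leaving precisely the identity \eqref{eq:lti-ls-cond-3}.

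There is no real obstacle here, since the statement is a cosmetic rewriting of the previous theorem; the only point to verify is that the sign and reordering of terms in the denominators match on both sides. In fact, \Cref{cor:lti-ls-G} makes the interpolatory interpretation transparent: the LS-optimal \ac{strom} satisfies bitangential Hermite interpolation conditions at the mirror images $-\overline{\lambda_k}$ of its own poles, along its own residue directions $c_k$ and $b_k$, with $G$ and $\hat{G}$ playing the role that $H$ and $\hat{H}$ played in the continuous $\Htwo$ case of \Cref{sec:lti-h2}.
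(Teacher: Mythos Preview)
Your proposal is correct and follows essentially the same approach as the paper's own proof: both simply evaluate $G$, $\hG$, and their derivatives at $-\overline{\lambda_k}$ and identify the resulting sums with the left- and right-hand sides of~\eqref{eq:lti-ls-cond}. The paper's proof is slightly terser, but the content is identical.
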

\begin{proof}
  The conditions~\eqref{eq:ls-interp-1} and~\eqref{eq:ls-interp-2} follow
  from~\eqref{eq:lti-ls-cond-1} and~\eqref{eq:lti-ls-cond-2},
  respectively,
  based on the definitions of $G$ and $\hG$ in~\eqref{eq:GandGhat}.
  The condition~\eqref{eq:lti-ls-cond-3} yields~\eqref{eq:ls-interp-3}
  after observing
  $G'(s) = -\sum_{i = 1}^N \rho_i \frac{H_i}{{(s - \imag \omega_i)}^2}$
  and
  $\hG'(s) = -\sum_{i = 1}^N \rho_i
  \frac{\hH(\imag \omega_i)}{{(s - \imag \omega_i)}^2}$.
\end{proof}
The conditions~\eqref{eq:ls-interp} illustrate that bitangential Hermite
interpolation is the necessary condition for the discrete $\Ltwo$ cost function
as well;
the interpolatory $\Ltwo$-optimal modeling framework equally applies.
The optimal approximant is still a bitangential Hermite interpolant,
but what is interpolated is different.
Here, two order-$N$ rational functions $G$ and $\hG$ interpolate each other
where $G$ depends on the evaluation of $H$ and
$\hG$ on the evaluations of $\hH$.
Yet, the interpolation points and directions are still determined by the poles
and residues of the optimal rational approximant $\hH$.
Mirror images of the reduced-order poles still appear as the interpolation
points.

\begin{remark}
  Alternatively, we can view the optimality conditions~\eqref{eq:lti-ls-cond} as
  discretized $\Htwo$-optimality conditions~\eqref{eq:h2-cond}.
  In particular, note that the interpolatory conditions~\eqref{eq:h2-cond} are
  equivalent to (using the Cauchy integral formula)
  \begin{align*}
    \int_{-\infty}^{\infty}
    \frac{H(\imag \omega) b_k}{-\imag \omega - \overline{\lambda_k}}
    \dif{\omega}
    & =
      \int_{-\infty}^{\infty}
      \frac{\hH(\imag \omega) b_k}{-\imag \omega - \overline{\lambda_k}}
      \dif{\omega}, \\
    \int_{-\infty}^{\infty}
    \frac{c_k\herm H(\imag \omega)}{-\imag \omega - \overline{\lambda_k}}
    \dif{\omega}
    & =
      \int_{-\infty}^{\infty}
      \frac{c_k\herm \hH(\imag \omega)}{-\imag \omega - \overline{\lambda_k}}
      \dif{\omega}, \\
    \int_{-\infty}^{\infty}
    \frac{c_i\herm H(\imag \omega) b_k}{
      \myparen*{-\imag \omega - \overline{\lambda_k}}^2}
    \dif{\omega}
    & =
      \int_{-\infty}^{\infty}
      \frac{c_i\herm \hH(\imag \omega) b_k}{
        \myparen*{-\imag \omega - \overline{\lambda_k}}^2
      }
      \dif{\omega},
  \end{align*}
  for $k = 1, 2, \ldots, \nrom$.
  Approximating these integrals (representing the $\Htwo$-optimality conditions)
  using a numerical quadrature with nodes $\omega_i$ and weights $\rho_i$ leads
  to the new optimality conditions~\eqref{eq:lti-ls-cond} for the cost
  function~\eqref{eq:lti-ls-obj}.
\end{remark}

\subsection{Numerical Example}%
\label{sec:penzl-ex}
To demonstrate the new interpolatory conditions from \Cref{cor:lti-ls-G} for the
rational discrete \ac{ls} minimization problem,
we use the Penzl's FOM model from the Niconet benchmark
collection~\cite{ChaV02}.
The model is \iac{lti} system of order $\nfom = 1006$ with $\nin = 1$ input and
$\nout = 1$ output.
We chose $\nrom = 2$ as the reduced order to make the illustration clear.
Clearly a high-fidelity approximation requires a higher order;
but our goal is here to illustrate the theory of \Cref{cor:lti-ls-G}.
For the data,
we take $50$ logarithmically-spaced frequencies between $10^0$ and $10^4$ on the
imaginary axis,
including the endpoints. With the inclusion of complex conjugate points,
we obtain $N = 100$ data points in the \ac{ls} problem~\eqref{eq:lti-ls-obj}.
We use the gradient-based optimization algorithm $\Ltwo$-Opt-PSF
in~\cite{MliG22}
(initialized with the \ac{rom} from iterative rational Krylov
algorithm~\cite{GugAB08})
and obtain \iac{rom} of order $\nrom = 2$ with poles at
$\lambda_1 \approx -4.7984$ and $\lambda_2 \approx -431.00$.

\Cref{fig:penzl-interp} shows the transfer functions $G$ and $\hG$,
defined in \Cref{cor:lti-ls-G},
when evaluated over the positive real axis.
Even though in the left plot,
we see overlap around $-\overline{\lambda_1}$ and $-\overline{\lambda_2}$,
it is not clear whether Hermite interpolation is achieved.
(Since $H$ is a single-input/single-output \ac{lti} system,
tangential interpolation boils downs to scalar interpolation, i.e.,
\eqref{eq:ls-interp-1} and \eqref{eq:ls-interp-2} coincide.)
To make the illustration clearer,
the right plot shows the difference between $G$ and $\hG$, and
here we see (from the shape and curvature of the error plot)
that $\hG$ is indeed a Hermite interpolant of $G$ at $-\overline{\lambda_1}$ and
$-\overline{\lambda_2}$ as the theory predicts.
We note that there is another interpolation point around $s \approx 15$.
However the derivative is not matched at this point;
thus no Hermite interpolation occurs at this point.

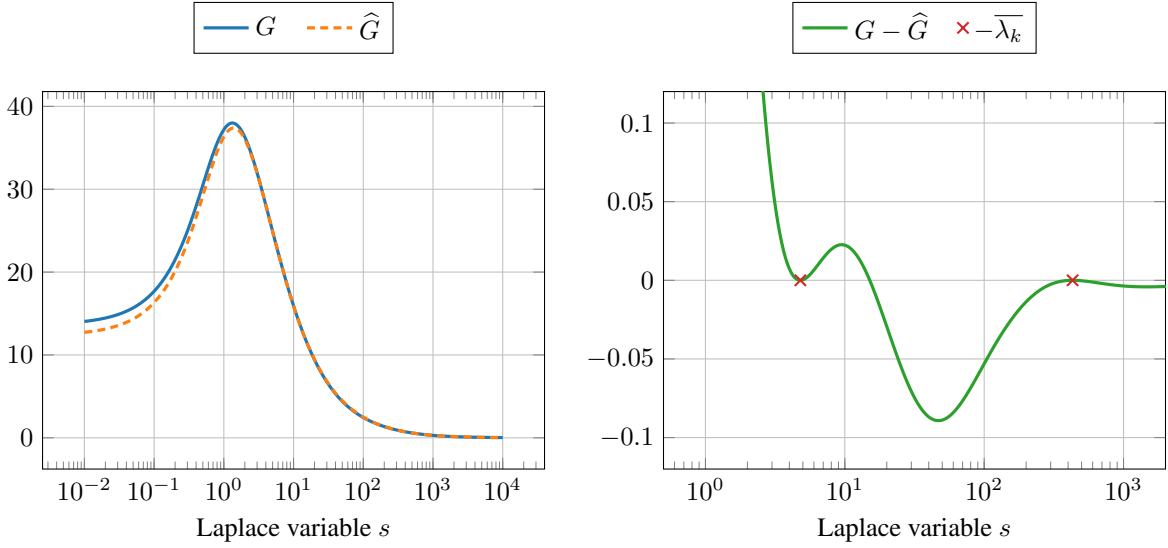
\begin{figure}[tb]
  \centering
  \tikzsetnextfilename{penzl-interp}
  \begin{tikzpicture}
    \begin{axis}[
        width=0.5\linewidth,
        height=0.4\linewidth,
        xlabel={Laplace variable $s$},
        xmode=log,
        grid=major,
        legend entries={$G$, $\hG$},
        legend columns=-1,
        legend style={
          at={(0.5, 1.1)},
          anchor=south,
          /tikz/every even column/.append style={column sep=2ex},
        },
        cycle list name=mpl,
      ]
      \addplot table [x=s, y=G] {fig/penzl_G.txt};
      \addplot table [x=s, y=Gr] {fig/penzl_G.txt};
    \end{axis}
    \begin{axis}[
        at={(0.5\linewidth, 0)},
        width=0.5\linewidth,
        height=0.4\linewidth,
        xmin=0.5,
        xmax=2000,
        ymin=-0.12,
        ymax=0.12,
        xlabel={Laplace variable $s$},
        xmode=log,
        yticklabel style={/pgf/number format/fixed},
        grid=major,
        legend entries={$G - \hG$, $-\overline{\lambda_k}$},
        legend columns=-1,
        legend style={
          at={(0.5, 1.1)},
          anchor=south,
          /tikz/every even column/.append style={column sep=2ex},
        },
      ]
      \addplot[mplC2, very thick] table [x=s, y=dG] {fig/penzl_G.txt};
      \addplot[only marks, mark=x, mark size=3, mplC3, thick]
        coordinates {(4.7984, 0) (431, 0)};
    \end{axis}
  \end{tikzpicture}
  \caption{Left: Transfer functions corresponding to the \ac{fom} and \ac{rom}
    data for the Penzl example ($\nfom = 1006$, $\nrom = 2$).
    Right: The difference between the modified outputs and
    the mirrored poles of the \ac{rom}.}%
  \label{fig:penzl-interp}
\end{figure}

The Python code used to compute the presented results can be obtained
from~\cite{Mli22b}.

\section{Stationary Parametric Problems}%
\label{sec:stat}
In \Cref{sec:lti,sec:lti-ls},
we have focused on approximating \ac{lti} systems.
Now we turn our attention to \emph{stationary} models and prove that
bitangential Hermite interpolation forms the necessary conditions for
$\Ltwo$-optimal \ac{mor} in this case as well.
Even though what is interpolated and the optimal interpolation points differ
from the \ac{lti} system case,
the optimality still requires bitangential Hermite interpolation.

\subsection{Necessary Conditions for Stationary Models}
Let us consider the stationary \ac{fom}
(resulting from, e.g., discretization of a stationary parametric partial
differential equation)
\begin{subequations}\label{eq:fom-simple}
  \begin{align}
    (\cAf_1 + \pp \cAf_2) \xf(\pp) & = \cBf, \\*
    \yf(\pp) & = \cCf \xf(\pp),
  \end{align}
\end{subequations}
where
$\xf(\pp) \in \RRf$ is the state,
$\yf(\pp) \in \RRoi$ is the output,
$\cAf_1, \cAf_2 \in \RRff$,
$\cBf \in \RRfi$,
$\cCf \in \RRof$, and
$\pset = [a, b] \subset \RR$ for some $a < b$.
Next, let
\begin{subequations}\label{eq:rom-simple}
  \begin{align}
    \myparen*{\cAr_1 + \pp \cAr_2} \xr(\pp) & = \cBr, \\*
    \yr(\pp) & = \cCr \xr(\pp),
  \end{align}
\end{subequations}
be \iac{strom} of order $\nrom$;
in particular,
$\xr(\pp) \in \RRr$ is the reduced state,
$\yr(\pp) \in \RRoi$ is the approximate output,
$\cAr_1, \cAr_2 \in \RRrr$,
$\cBr \in \RRri$, and
$\cCr \in \RRor$.
Our goal in this section is to show that the $\Ltwo$-optimal
\ac{strom}~\eqref{eq:rom-simple} satisfies special interpolation conditions.
More specially, we show that
a modified reduced-order output $\yr$ interpolates
a modified full-order output $\yf$
at special parameter points $\lambda_k$.

Note that both $\yf$ and $\yr$ are rational functions of the parameter $\pp$.
Motivated by the interpolation-based optimality conditions
from \Cref{sec:lti-h2} and those for frequency-limited $\Htwo$-optimal
\ac{mor}~\cite{VuiPA14},
we develop the interpolatory conditions for the $\Ltwo$-optimal approximation of
\acp{fom} of the form~\eqref{eq:fom-simple} using the pole-residue forms of
$\yf$ and $\yr$.

The pole-residue form of $\yr$~\eqref{eq:rom-simple} can be obtained similarly
as done for \ac{lti} system.
Let $\cAr_2$ be invertible and
$\cAr_2^{-1} \cAr_1$ have $\nrom$ distinct real eigenvalues.
Then, proceeding as in \Cref{sec:lti-h2},
let $\hT, \hS \in \RRrr$ be invertible matrices such that
$\hS\tran \cAr_1 \hT
= -\Lambda
= \mydiag{-\lambda_1, -\lambda_2, \ldots, -\lambda_{\nrom}}$ and
$\hS\tran \cAr_2 \hT = I$.
Then we obtain that
$\yr(\pp) = \sum_{j = 1}^{\nrom} \frac{c_j b_j\tran}{\pp - \lambda_j}$,
where
$c_j = \cCr \hT e_j \in \RRo$, and
$b_j = \cBr\tran \hS\tran e_j \in \RRi$
for $j = 1, 2, \ldots, \nrom$.

Inspired by the structure of \iac{fom} in the numerical example in
\Cref{sec:poisson},
we allow a slightly more general pole-residue form for $\yf$, namely
$\yf(\pp) = \Phi_0 + \sum_{i = 1}^{\nfom} \frac{\Phi_i}{\pp - \nu_i}$,
where $\Phi_0$ is a constant term,
$\nu_i$ are the poles and $\Phi_i$ the corresponding residues for
$i = 1, 2, \ldots, \nfom$.
The constant term $\Phi_0$ results from allowing
$\cAf_2$ to be a singular matrix
(as in the case of the numerical example in \Cref{sec:poisson}).
The details of the derivation of this pole-residue form are given in
\Cref{sec:dae}.

\begin{theorem}\label{thm:stat-cond}
  Let $\yr(\pp) = \sum_{j = 1}^{\nrom} \frac{c_j b_j\tran}{\pp - \lambda_j}$
  be the output of the \ac{strom}~\eqref{eq:rom-simple}
  with pairwise distinct $\lambda_j \in \RR \setminus [a, b]$.
  Furthermore,
  let $\yf(\pp) = \Phi_0 + \sum_{i = 1}^{\nfom} \frac{\Phi_i}{\pp - \nu_i}$
  be the output of the \ac{fom}~\eqref{eq:fom-simple},
  also with pairwise distinct $\nu_i \in \RR \setminus [a, b]$.
  For any $\sigma \in \RR \setminus \{a, b\}$,
  define the function $\fundef{f_\sigma}{\RR \setminus \{a, b\}}{\RR}$ as
  \begin{align*}
    f_{\sigma}(\pp)
    & =
      \begin{cases}
        \myparen*{
          \ln\abs*{\frac{\pp - b}{\pp - a}}
          - \ln\abs*{\frac{\sigma - b}{\sigma - a}}
        }
        \frac{1}{\pp - \sigma},
        & \text{if } \pp \neq \sigma, \\[1ex]
        \frac{b - a}{(\sigma - a) (\sigma - b)},
        & \text{if } \pp = \sigma.
      \end{cases}
  \end{align*}
  Furthermore, define the modified output functions
  $\fundef{Y, \hY}{\RR \setminus \{a, b\}}{\RR}$ as
  \begin{equation}\label{eq:stat-out-mod}
    Y(\pp)
    = \ln\abs*{\frac{\pp - b}{\pp - a}} \Phi_0
    + \sum_{i = 1}^{\nfom} f_{\nu_i}(\pp) \Phi_i
    \quad \text{and} \quad
    \hY(\pp) = \sum_{j = 1}^{\nrom} f_{\lambda_j}(\pp) c_j b_j\tran.
  \end{equation}
  Let~$\yr$ be an $\Ltwo$-optimal structured approximation of~$\yf$.
  Then,
  \begin{subequations}\label{eq:stat-cond}
    \begin{align}
      \label{eq:stat-cond-b}
      Y(\lambda_k) b_k & = \hY(\lambda_k) b_k, \\*
      \label{eq:stat-cond-c}
      c_k\tran Y(\lambda_k) & = c_k\tran \hY(\lambda_k), \\*
      \label{eq:stat-cond-bc}
      c_k\tran Y'(\lambda_k) b_k & = c_k\tran \hY'(\lambda_k) b_k,
    \end{align}
  \end{subequations}
  for $k = 1, 2, \ldots, \nrom$.
\end{theorem}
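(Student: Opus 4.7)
My plan is to apply Corollary~\ref{cor:cond} directly, using the fact that the \ac{strom}~\eqref{eq:rom-simple} fits the parameter-separable form~\eqref{eq:rom-param-sep-form} with the choices $\qAr=2$, $\car_1(\pp)=1$, $\car_2(\pp)=\pp$, $\qBr=\qCr=1$, $\cbr_1(\pp)=\ccr_1(\pp)=1$, together with $\pset=[a,b]$ and $\measure$ the Lebesgue measure. Exactly as in \Cref{sec:lti-h2,sec:lti-ls}, I will exploit the pole-residue form of the \ac{strom} to reduce the matrix identities of \Cref{cor:cond} to scalar identities attached to each pole $\lambda_k$. Concretely, using $\hS\tran\cAr_1\hT=-\Lambda$ and $\hS\tran\cAr_2\hT=I$, one gets $\cAr_1+\pp\cAr_2=\hS^{-T}(\pp I-\Lambda)\hT^{-1}$, so that $\xr(\pp)\tran\hT^{-T}e_k = b_k\tran/(\pp-\lambda_k)$ and $e_k\tran\hS^{-1}\xrd(\pp) = c_k\tran/(\pp-\lambda_k)$. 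Post-multiplying~\eqref{eq:cond-C} by $\hT^{-T}e_k$ (with $k=1$) and pre-multiplying~\eqref{eq:cond-B} by $e_k\tran\hS^{-1}$ (with $j=1$) collapses them to the scalar integral identities
\begin{equation*}
\int_a^b \frac{\yf(\pp)\,b_k}{\pp-\lambda_k}\,\dd\pp = \int_a^b \frac{\yr(\pp)\,b_k}{\pp-\lambda_k}\,\dd\pp,
\qquad
\int_a^b \frac{c_k\tran\yf(\pp)}{\pp-\lambda_k}\,\dd\pp = \int_a^b \frac{c_k\tran\yr(\pp)}{\pp-\lambda_k}\,\dd\pp.
\end{equation*}
For the Hermite condition, I will apply~\eqref{eq:cond-A} with $i=1$ (so $\car_1=1$), sandwiching by $e_k\tran\hS^{-1}$ and $\hT^{-T}e_k$, which yields the same identities but with $(\pp-\lambda_k)^2$ in the denominator.

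The core of the proof is then a partial-fractions computation that converts these integrals into the functions $Y$ and $\hY$. Substituting the assumed pole-residue expansions $\yf(\pp)=\Phi_0+\sum_i\Phi_i/(\pp-\nu_i)$ and $\yr(\pp)=\sum_j c_j b_j\tran/(\pp-\lambda_j)$, the key elementary antiderivatives are
\begin{equation*}
\int_a^b \frac{\dd\pp}{\pp-\sigma} = \ln\abs[\Big]{\frac{\sigma-b}{\sigma-a}},
\qquad
\int_a^b \frac{\dd\pp}{(\pp-\sigma)(\pp-\tau)} = f_{\sigma}(\tau) = f_{\tau}(\sigma),
\end{equation*}
for $\sigma\neq\tau$, the second identity being exactly the definition of $f_\sigma$ in the theorem (derived by the standard partial-fraction split $\tfrac{1}{(\pp-\sigma)(\pp-\tau)}=\tfrac{1}{\tau-\sigma}(\tfrac{1}{\pp-\tau}-\tfrac{1}{\pp-\sigma})$). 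Applying these to the first two identities directly produces $Y(\lambda_k)b_k=\hY(\lambda_k)b_k$ and $c_k\tran Y(\lambda_k)=c_k\tran\hY(\lambda_k)$, which are~\eqref{eq:stat-cond-b}–\eqref{eq:stat-cond-c}.

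For the Hermite identity, I need the double-pole analogue $\int_a^b \dd\pp/[(\pp-\sigma)(\pp-\tau)^2]$. Splitting
\begin{equation*}
\frac{1}{(\pp-\sigma)(\pp-\tau)^2} = \frac{1}{(\tau-\sigma)^2}\Bigl(\frac{1}{\pp-\sigma}-\frac{1}{\pp-\tau}\Bigr) + \frac{1}{\tau-\sigma}\cdot\frac{1}{(\pp-\tau)^2}
\end{equation*}
and integrating over $[a,b]$, a short rearrangement identifies the result as $f_\sigma'(\tau)$, where the derivative is with respect to the free argument of $f_\sigma$ (and the $\Phi_0$ contribution gives the derivative of $\ln\abs{(\pp-b)/(\pp-a)}$ at $\lambda_k$). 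This yields $c_k\tran Y'(\lambda_k)b_k = c_k\tran\hY'(\lambda_k)b_k$, which is~\eqref{eq:stat-cond-bc}.

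The main obstacle is bookkeeping rather than conceptual: one must check that the elementary integral $\int_a^b \dd\pp/[(\pp-\sigma)(\pp-\tau)^2]$ truly equals $f_\sigma'(\tau)$, and that the special case $\pp=\sigma$ of $f_\sigma$ (defined by continuity as $(b-a)/[(\sigma-a)(\sigma-b)]$) is consistent with what appears when $\nu_i=\lambda_k$ (or $\lambda_j=\lambda_k$ in the $\hY$ computation), so that all identities remain valid for the pairwise-distinct poles assumed in the theorem. Aside from this careful partial-fractions algebra and the matching of signs inside the absolute values (all poles lie outside $[a,b]$, so every $\ln$ is well defined), the rest of the argument is a direct specialization of \Cref{cor:cond}.
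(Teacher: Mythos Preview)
Your proposal is correct and follows essentially the same route as the paper's own proof: apply \Cref{cor:cond} with the choices $\qAr=2$, $\car_1=1$, $\car_2=\pp$, $\qBr=\qCr=1$, pre/postmultiply by $e_k\tran\hS^{-1}$ and $\hT\mtran e_k$ to isolate the $k$th pole, and then evaluate the resulting integrals $\int_a^b \frac{\dd\pp}{(\pp-\sigma)(\pp-\lambda_k)}$ and $\int_a^b \frac{\dd\pp}{(\pp-\sigma)(\pp-\lambda_k)^2}$ by partial fractions to recognize $f_\sigma(\lambda_k)$ and $f_\sigma'(\lambda_k)$, handling the degenerate case $\sigma=\lambda_k$ separately. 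The paper carries out exactly these computations (including the explicit case split $\nu_i=\lambda_k$ versus $\nu_i\neq\lambda_k$), so your plan matches it step for step.
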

\begin{remark}\label{rem:thm-stat}
  Some remarks are in order before we prove \Cref{thm:stat-cond}.
  The optimality conditions~\eqref{eq:stat-cond} show that bitangential Hermite
  interpolation forms the necessary conditions for $\Ltwo$-optimal approximation
  over an interval $[a, b]$;
  thus extending the theory from \ac{lti} systems to stationary problems.
  However,
  what is to be interpolated is no longer the original function $\yf$ itself,
  instead the modified output $Y$ in~\eqref{eq:stat-out-mod} needs to be
  interpolated.
  Another major difference here is that
  the interpolation occurs at the reduced system poles,
  as opposed to at \emph{the mirror images of the poles}
  in the \ac{lti} system case.
  (Both modified outputs $Y$ and $\hY$ are well defined at the reduced poles as
  shown in the proof below.)
\end{remark}
\begin{proof}
  First note that $f_\sigma$ is continuously differentiable and
  its derivative is
  \begin{align*}
    f_{\sigma}'(\pp)
    & =
      \begin{cases}
        \myparen*{
        \frac{(b - a) (\pp - \sigma)}{(\pp - a) (\pp - b)}
        - \ln\abs*{\frac{\pp - b}{\pp - a}}
        + \ln\abs*{\frac{\sigma - b}{\sigma - a}}
        }
        \frac{1}{{(\pp - \sigma)}^2},
        & \text{if } \pp \neq \sigma, \\[1ex]
        \frac{(b - a) (a + b - 2 \sigma)}{2 {(\sigma - a)}^2 {(\sigma - b)}^2},
        & \text{if } \pp = \sigma.
      \end{cases}
  \end{align*}
  Since $f_\sigma$ is continuously differentiable, so are $Y$ and $\hY$.

  Next, compare the simple \ac{strom}~\eqref{eq:rom-simple} to the general
  case~\eqref{eq:rom-param-sep-form} to observe that we have
  $\qAr = 2$ with $\car_1(\pp) = 1$ and $\car_2(\pp) = \pp$,
  $\qBr = 1$ with $\cbr_1(\pp) = 1$, and
  $\qCr = 1$ with $\ccr_1(\pp) = 1$.
  It follows from the left-hand side of~\eqref{eq:cond-C} in \Cref{cor:cond}
  that
  \begin{align*}
    \int_a^b \yf(\pp) \xr(\pp)\tran \dif{\pp}
    & =
      \int_a^b
      \myparen*{\Phi_0 + \sum_{i = 1}^{\nfom} \frac{\Phi_i}{\pp - \nu_i}}
      \cBr\tran \myparen*{\cAr_1 + \pp \cAr_2}\mtran
      \dif{\pp}.
  \end{align*}
  Then, for $k = 1, 2, \ldots, \nrom$, we have
  \begin{align*}
    \int_a^b \yf(\pp) \xr(\pp)\tran \dif{\pp} \, \hT\mtran e_k
    & =
      \int_a^b
      \myparen*{\Phi_0 + \sum_{i = 1}^{\nfom} \frac{\Phi_i}{\pp - \nu_i}}
      \cBr\tran \hS \myparen*{-\Lambda + \pp I}\mtran e_k
      \dif{\pp} \\
    & =
      \int_a^b
      \frac{\Phi_0 b_k}{\pp - \lambda_k}
      \dif{\pp}
      +
      \sum_{i = 1}^{\nfom}
      \int_a^b \frac{\Phi_i b_k}{(\pp - \nu_i) (\pp - \lambda_k)}
      \dif{\pp}.
  \end{align*}
  If $\nu_i = \lambda_k$, then
  \begin{align*}
    \int_a^b
    \frac{\Phi_i b_k}{(\pp - \nu_i) (\pp - \lambda_k)}
    \dif{\pp}
    & =
      \int_a^b \frac{\Phi_i b_k}{{(\pp - \nu_i)}^2}
      \dif{\pp}
      % =
      % \left. \frac{-\Phi_i b_k}{\pp - \nu_i} \right|_a^b
      =
      \frac{(b - a) \Phi_i b_k}{(\nu_i - a) (\nu_i - b)}
      =
      f_{\nu_i}(\lambda_k) \Phi_i b_k.
  \end{align*}
  Otherwise,
  \begin{align*}
    &
      \int_a^b
      \frac{\Phi_i b_k}{(\pp - \nu_i) (\pp - \lambda_k)}
      \dif{\pp}
      =
      \frac{1}{\lambda_k - \nu_i}
      \int_a^b
      \myparen*{\frac{1}{\pp - \lambda_k} - \frac{1}{\pp - \nu_i}}
      \dif{\pp}
      \,\Phi_i b_k \\
      % =
      % \frac{1}{\lambda_k - \nu_i}
      % \left.
      % \myparen*{\ln\abs*{\pp - \lambda_k} - \ln\abs*{\pp - \nu_i}}
      % \right|_a^b
      % \Phi_i b_k \\
    & =
      \myparen*{\ln\abs*{\frac{\lambda_k - b}{\lambda_k - a}}
        - \ln\abs*{\frac{\nu_i - b}{\nu_i - a}}}
      \frac{\Phi_i}{\lambda_k - \nu_i} b_k
      =
      f_{\nu_i}(\lambda_k) \Phi_i b_k.
  \end{align*}
  Therefore, we obtain
  \begin{align*}
    \int_a^b \yf(\pp) \xr(\pp)\tran \dif{\pp} \, \hT\mtran e_k
    & =
      \ln\abs*{\frac{\lambda_k - b}{\lambda_k - a}} \Phi_0 b_k
      + \sum_{i = 1}^{\nfom}
      f_{\nu_i}(\lambda_k) \Phi_i b_k
      =
      Y(\lambda_k) b_k.
  \end{align*}
  From the right-hand side of~\eqref{eq:cond-C}, we similarly find
  \begin{align*}
    &
      \int_a^b \yr(\pp) \xr(\pp)\tran \dif{\pp} \, \hT\mtran e_k \\*
    & =
      \sum_{\substack{j = 1 \\ j \neq k}}^{\nrom}
      \myparen*{\ln\abs*{\frac{\lambda_k - b}{\lambda_k - a}}
        - \ln\abs*{\frac{\lambda_j - b}{\lambda_j - a}}}
      \frac{c_j b_j\tran}{\lambda_k - \lambda_j} b_k
      + \frac{(b - a) c_k b_k\tran}{(\lambda_k - a) (\lambda_k - b)} b_k
      =
      \hY(\lambda_k) b_k.
  \end{align*}
  Therefore, we get the right-tangential interpolation
  conditions~\eqref{eq:stat-cond-b}.

  Using~\eqref{eq:cond-B}, we similarly find
  \begin{equation*}
    e_k\tran \hS^{-1} \int_a^b \xr_d(\pp) \yf(\pp) \dif{\pp}
    = c_k\tran Y(\lambda_k)
    \quad \textnormal{and} \quad
    e_k\tran \hS^{-1} \int_a^b \xr_d(\pp) \yr(\pp) \dif{\pp}
    = c_k\tran \hY(\lambda_k),
  \end{equation*}
  which yield the left-tangential interpolation
  conditions~\eqref{eq:stat-cond-c}.

  Using~\eqref{eq:cond-A} corresponding to $\cAr_1$, we find
  \begin{align*}
    e_k\tran \hS^{-1}
    \int_a^b \xr_d(\pp) \yf(\pp) \xr(\pp)\tran \dif{\pp}
    \, \hT\mtran e_k
    & =
      \int_a^b \frac{c_k\tran \Phi_0 b_k}{{(\pp - \lambda_k)}^2} \dif{\pp} \\*
    & \quad
      + \sum_{i = 1}^{\nfom}
      \int_a^b \frac{c_k\tran \Phi_i b_k}{(\pp - \nu_i) {(\pp - \lambda_k)}^2}
      \dif{\pp}.
  \end{align*}
  If $\nu_i = \lambda_k$, then
  \begin{align*}
    \int_a^b
    \frac{c_k\tran \Phi_i b_k}{(\pp - \nu_i) {(\pp - \lambda_k)}^2}
    \dif{\pp}
    & =
      \int_a^b \frac{c_k\tran \Phi_i b_k}{{(\pp - \nu_i)}^3}
      \dif{\pp}
      % =
      % \left. \frac{-c_k\tran \Phi_i b_k}{2 {(\pp - \nu_i)}^2} \right|_a^b \\*
      =
      \frac{(b - a) (a + b - 2 \nu_i) c_k\tran \Phi_i b_k}%
      {2 {(\nu_i - a)}^2 {(\nu_i - b)}^2} \\
    & =
      f_{\nu_i}'(\lambda_k) c_k\tran \Phi_i b_k.
  \end{align*}
  Otherwise,
  \begin{align*}
    \int_a^b
    \frac{c_k\tran \Phi_i b_k}{(\pp - \nu_i) {(\pp - \lambda_k)}^2}
    \dif{\pp}
    & =
      \frac{1}{{(\lambda_k - \nu_i)}^2}
      \int_a^b
      \myparen*{
        \frac{1}{\pp - \nu_i}
        - \frac{1}{\pp - \lambda_k}
        + \frac{\lambda_k - \nu_i}{{(\pp - \lambda_k)}^2}
      }
      \dif{\pp}
      \, c_k\tran \Phi_i b_k \\
    % & =
    %   \frac{1}{{(\lambda_k - \nu_i)}^2}
    %   \left.
    %   \myparen*{
    %     \ln\abs*{\pp - \nu_i}
    %     - \ln\abs*{\pp - \lambda_k}
    %     - \frac{\lambda_k - \nu_i}{\pp - \lambda_k}
    %   }
    %   \right|_a^b
    %   c_k\tran \Phi_i b_k \\
    & =
      \myparen*{
        \ln\abs*{\frac{\nu_i - b}{\nu_i - a}}
        - \ln\abs*{\frac{\lambda_k - b}{\lambda_k - a}}
        + \frac{(b - a) (\lambda_k - \nu_i)}{(\lambda_k - a) (\lambda_k - b)}
      }
      \frac{c_k\tran \Phi_i b_k}{{(\lambda_k - \nu_i)}^2} \\
    & =
      f_{\nu_i}'(\lambda_k) c_k\tran \Phi_i b_k.
  \end{align*}
  Therefore,
  \begin{align*}
    &
      e_k\tran \hS^{-1}
      \int_a^b \xr_d(\pp) \yf(\pp) \xr(\pp)\tran \dif{\pp}
      \, \hT\mtran e_k \\*
    & =
      \myparen*{\frac{1}{\lambda_k - b} - \frac{1}{\lambda_k - a}}
      c_k\tran \Phi_0 b_k
      + \sum_{i = 1}^{\nfom}
      f_{\nu_i}'(\lambda_k) c_k\tran \Phi_i b_k
      =
      c_k\tran Y'(\lambda_k) b_k.
  \end{align*}
  Similarly, we find that
  \begin{align*}
    e_k\tran \hS^{-1}
    \int_a^b \xr_d(\pp) \yr(\pp) \xr(\pp)\tran \dif{\pp}
    \, \hT\mtran e_k
    & =
      c_k\tran \hY'(\lambda_k) b_k,
  \end{align*}
  which proves the final bitangential Hermite interpolation
  conditions~\eqref{eq:stat-cond-bc}.
\end{proof}
\Cref{thm:stat-cond} has shown that bitangential Hermite interpolation,
which is at the core of $\Htwo$-optimal approximation of \ac{lti} systems,
also naturally appears in the approximation of parametric stationary problems.
In the \ac{lti} system setting,
these interpolatory conditions have been at the core of many algorithmic
developments and extended to various different settings;
see, e.g.,~\cite{GugAB06,GugAB08,BeaG09,AntBG10,BauBBetal11,GugPBS12,GugSW13,%
  FlaBG13,AniBGA13,BeaB14,FlaG15,BreBG15,CarGB18,BenGW21a,BenGW21b},
and the references therein.
Similar potential extensions and algorithmic developments for the stationary
parametric case will be a topic of future research.

\subsection{Numerical Example}%
\label{sec:poisson}
As we did for \Cref{cor:lti-ls-G} in \Cref{sec:penzl-ex},
we demonstrate the new interpolatory results of \Cref{thm:stat-cond}
using a numerical example.
Following~\cite{MliG22},
we consider the Poisson equation over the unit square $\Omega = {(0, 1)}^2$ with
homogeneous Dirichlet boundary conditions:
\begin{align*}
  -\nabla \cdot (d(z, \pp) \nabla \xf(z, \pp)) & = 1,
  & z \in \Omega, \\*
  \xf(z, \pp) & = 0,
  & z \in \partial \Omega,
\end{align*}
where
$d(z, \pp) = z_1 + \pp (1 - z_1)$ and
$\pset = [0.1, 10]$.
After a finite element discretization,
we obtain the \ac{fom} of the form~\eqref{eq:fom-simple}
with $\nfom = 1089$ and $\nin = 1$,
and the choice of $\cCf = \cBf\tran$ (and $\nout = 1$).
We use the gradient-based optimization algorithm $\Ltwo$-Opt-PSF
in~\cite{MliG22}
(initialized with the \ac{strom} resulting from applying a reduced-basis
approach~\cite{BenGQetal20b}) and
obtain \iac{strom} of order $\nrom = 2$ with poles at
$\lambda_1 \approx -3.2777$ and $\lambda_2 \approx -0.30509$.

To numerically verify the interpolatory conditions,
we need the pole-residue forms of $\yf$ and $\yr$ as stated in
\Cref{thm:stat-cond}.
Note that these computations are not needed by the $\Ltwo$-Opt-PSF algorithm and
are done here just to illustrate the interpolation theory.
Since $\cAr_2$ is invertible,
the pole-residue form of $\yr$ directly follows as explained above,
right before \Cref{thm:stat-cond}.
The situation is more involved for $\yf$ since $\cAf_2$ in~\eqref{eq:fom-simple}
is rank-deficient; it has numerical rank $\nfom_2 = 961$.
We use the procedure explained in \Cref{sec:dae} to compute the
pole-residue form of $\yf$.
With the pole-residue forms of $\yf$ and $\yr$,
we can now numerically evaluate the modified outputs $Y$ and $\hY$ defined
in~\eqref{eq:stat-out-mod} and illustrate the interpolation result.

The left plot in \Cref{fig:poisson-interp} shows that
$Y$ and $\hY$ almost overlap,
making it unclear whether Hermite interpolation is achieved.
To illustrate the results better,
the right plot in \Cref{fig:poisson-interp} depicts the difference $Y - \hY$
around the location of the poles of the \ac{strom},
clearly demonstrating (based on the shape and curvature of the error plot)
that the \ac{strom} satisfies the Hermite interpolation
conditions~\eqref{eq:stat-cond} of \Cref{thm:stat-cond}.
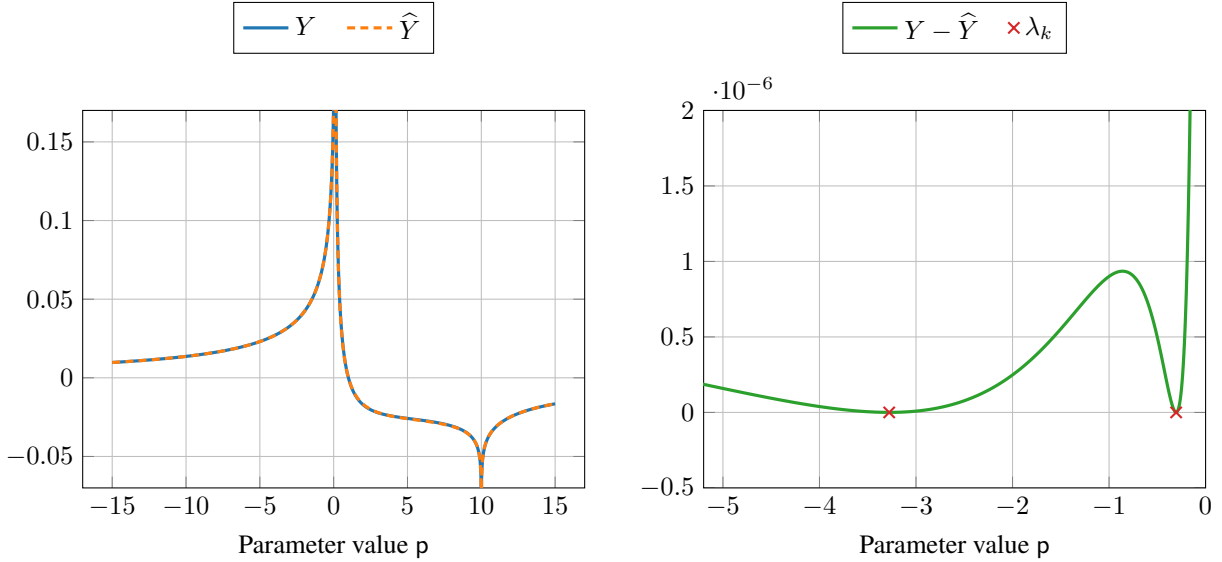
\begin{figure}[tb]
  \centering
  \tikzsetnextfilename{poisson-interp}
  \begin{tikzpicture}
    \begin{axis}[
        width=0.5\linewidth,
        height=0.4\linewidth,
        xmin=-17,
        xmax=17,
        ymin=-0.07,
        ymax=0.17,
        xlabel={Parameter value $\pp$},
        yticklabel style={/pgf/number format/fixed},
        grid=major,
        legend entries={$Y$, $\hY$},
        legend columns=-1,
        legend style={
          at={(0.5, 1.15)},
          anchor=south,
          /tikz/every even column/.append style={column sep=2ex},
        },
        cycle list name=mpl,
      ]
      \addplot table [x=p, y=Y] {fig/poisson_Y_Yr.txt};
      \addplot table [x=p, y=Yr] {fig/poisson_Y_Yr.txt};
    \end{axis}
    \begin{axis}[
        at={(0.5\linewidth, 0)},
        width=0.5\linewidth,
        height=0.4\linewidth,
        xmin=-5.2,
        xmax=0,
        ymin=-5e-7,
        ymax=2e-6,
        xlabel={Parameter value $\pp$},
        grid=major,
        legend entries={$Y - \hY$, $\lambda_k$},
        legend columns=-1,
        legend style={
          at={(0.5, 1.15)},
          anchor=south,
          /tikz/every even column/.append style={column sep=2ex},
        },
      ]
      \addplot[mplC2, very thick] table [x=p, y=dY] {fig/poisson_dY.txt};
      \addplot[only marks, mark=x, mark size=3, mplC3, thick]
        coordinates {(-3.27772, 0) (-0.30509, 0)};
    \end{axis}
  \end{tikzpicture}
  \caption{Left: Modified outputs of the \ac{fom} and \ac{rom} for the Poisson
    example ($\nfom = 1089$, $\nrom = 2$).
    Right: Difference between the modified outputs and
    the poles of the \ac{rom}.}%
  \label{fig:poisson-interp}
\end{figure}

It is interesting to note that the interpolation points $\lambda_1$ and
$\lambda_2$ are outside the parameter space $\pset = [a, b]$.
This is in agreement with the $\Htwo$-optimality conditions~\eqref{eq:h2-cond},
where interpolation is enforced away from the imaginary axis, in particular, in
the open right half-plane.
In contrast to the reduced-basis methods that choose the greedy sampling points
in the parameter interval of interest,
$\Ltwo$-optimal reduced-order modeling necessitates interpolation of a modified
output $Y$ outside the domain of interest.

The Python code used to compute the presented results can be obtained
from~\cite{Mli22b}.

\section{Conclusion}%
\label{sec:conclusion}
We developed a unifying framework for $\Ltwo$-optimal
interpolatory reduced-order modeling.
In particular, we showed that the $\Ltwo$-optimality conditions resulting from this framework naturally cover
known interpolatory conditions for $\Htwo$-optimal \ac{mor} of \ac{lti} systems,
both for continuous-time and discrete-time cases.
Furthermore, they lead to interpolatory conditions for $\HtwoLtwo$-optimal
\ac{mor} of multi-input/multi-output parametric \ac{lti} systems.
We also derived novel bitangential Hermite interpolation conditions
for rational \ac{ls} problems and
for a class of stationary parametric problems.
These results illustrate that bitangential Hermite interpolation appears as the main
tool for $\Ltwo$-optimality across different domains.

Interpolatory conditions for $\Htwo$-optimal \ac{mor} lead to various numerical
algorithms for \ac{mor},
such as the iterative rational Krylov algorithm.
Algorithmic implications of the new interpolatory conditions for the discrete
\ac{ls} measure and
the stationary problems are interesting avenues to investigate.

\appendix

\section{Proof of \texorpdfstring{\Cref{lem:kron-grad}}{Lemma}}%
\label{sec:lemma-proof}
We have that for any $\Delta A \in \RR^{n \times n}$,
\begin{align*}
  G(A + \Delta A)
  & =
    F((A + \Delta A) \otimes B)
    =
    F(A \otimes B + \Delta A \otimes B) \\*
  & =
    G(A)
    + \ipF*{\nabla F(A \otimes B)}{\Delta A \otimes B}
    + o(\normF{\Delta A}).
\end{align*}
Let $\fundef{P_A}{\RR^{n m \times n m}}{\RR^{n \times n}}$ be defined as
\begin{equation*}
  P_A(Y) = \argmin_{A_1 \in \RR^{n \times n}} \normF{Y - A_1 \otimes B}^2.
\end{equation*}
Using the orthogonality property of the least squares approximation,
it follows that $\ipF{Y - P_A(Y) \otimes B}{A_1 \otimes B} = 0$ for all
$A_1 \in \RR^{n \times n}$.
Then,
\begin{align*}
  &
    \ipF*{\nabla F(A \otimes B)}{\Delta A \otimes B}
    =
    \ipF*{P_A(\nabla F(A \otimes B)) \otimes B}{\Delta A \otimes B} \\
  & =
    \trace*{(P_A(\nabla F(A \otimes B)) \otimes B)\tran (\Delta A \otimes B)}
    =
    \trace*{P_A(\nabla F(A \otimes B))\tran \Delta A \otimes B\tran B} \\
  & =
    \trace*{P_A(\nabla F(A \otimes B))\tran \Delta A} \trace*{B\tran B}
    =
    \ipF*{P_A(\nabla F(A \otimes B))}{\Delta A} \normF{B}^2 \\
  & =
    \ipF*{\normF{B}^2 P_A(\nabla F(A \otimes B))}{\Delta A}.
\end{align*}
Therefore, it follows that $G$ is differentiable at $A$ and
\begin{equation}\label{eq:grad-G}
  \nabla G(A)
  =
  \normF{B}^2
  P_A(\nabla F(A \otimes B)).
\end{equation}
To find $P_A(Y)$,
define $V_A = \{A_1 \otimes B : A_1 \in \RR^{n \times n}\}$.
Note that $V_A$ is a subspace of $\RR^{n m \times n m}$.
We see that
\(
  \mybrace{
    \frac{1}{\normF{B}} e_k e_{\ell}\tran \otimes B :
    k, \ell \in \{1, 2, \ldots, n\}
  }
\)
is an orthonormal basis for $V_A$.
Therefore
\begin{align*}
  P_{A}(Y) \otimes B
  & =
    \sum_{k, \ell = 1}^n
    \ipF*{Y}{\frac{1}{\normF{B}} e_k e_{\ell}\tran \otimes B}
    \myparen*{\frac{1}{\normF{B}} e_k e_{\ell}\tran \otimes B} \\
  & =
    \frac{1}{\normF{B}^2}
    \sum_{k, \ell = 1}^n
    \trace*{\myparen*{e_k e_{\ell}\tran \otimes B}\tran Y}
    \myparen*{e_k e_{\ell}\tran \otimes B}.
\end{align*}
Note that
\begin{align*}
  &
    \trace*{\myparen*{e_k e_{\ell}\tran \otimes B}\tran Y}
    =
    \trace*{\myparen*{e_{\ell} e_k\tran \otimes B\tran} Y}
    =
    \trace*{
      \myparen*{e_{\ell} \otimes B_R\herm}
      \myparen*{e_k\tran \otimes B_L\herm}
      Y
    } \\
  & =
    \trace*{
      \myparen*{e_k\tran \otimes B_L\herm}
      Y
      \myparen*{e_{\ell} \otimes B_R\herm}
    }
    =
    \sum_{j = 1}^m
    e_j\tran
    \myparen*{e_k\tran \otimes B_L\herm}
    Y
    \myparen*{e_{\ell} \otimes B_R\herm}
    e_j \\
  & =
    \sum_{j = 1}^m
    \myparen*{e_k\tran \otimes e_j\tran B_L\herm}
    Y
    \myparen*{e_{\ell} \otimes B_R\herm e_j}
    =
    \sum_{j = 1}^m
    e_k\tran
    \myparen*{I_n \otimes e_j\tran B_L\herm}
    Y
    \myparen*{I_n \otimes B_R\herm e_j}
    e_{\ell} \\
  & =
    \mybrack*{
      \sum_{j = 1}^m
      \myparen*{I_n \otimes e_j\tran B_L\herm}
      Y
      \myparen*{I_n \otimes B_R\herm e_j}
    }_{k \ell}.
\end{align*}
Now,
\begin{align*}
  P_A(Y) \otimes B
  & =
    \frac{1}{\normF{B}^2}
    \myparen*{
      \sum_{k, \ell = 1}^n
      \mybrack*{
        \sum_{j = 1}^m
        \myparen*{I_n \otimes e_j\tran B_L\herm}
        Y
        \myparen*{I_n \otimes B_R\herm e_j}
      }_{k \ell}
      e_k e_{\ell}\tran
    }
    \otimes B \\
  & =
    \myparen*{
      \frac{1}{\normF{B}^2}
      \sum_{j = 1}^m
      \myparen*{I_n \otimes e_j\tran B_L\herm}
      Y
      \myparen*{I_n \otimes B_R\herm e_j}
    }
    \otimes B.
\end{align*}
Using that $B$ is nonzero, we obtain
\begin{align*}
  P_A(Y)
  & =
    \frac{1}{\normF{B}^2}
    \sum_{j = 1}^m
    \myparen*{I_n \otimes e_j\tran B_L\herm}
    Y
    \myparen*{I_n \otimes B_R\herm e_j},
\end{align*}
and the expression for $\nabla G(A)$ follows using~\eqref{eq:grad-G}.
The expression for $\nabla H(B)$ can be found analogously.

\section{Pole-residue Form with a Constant Term}%
\label{sec:dae}
To convert the form in~\eqref{eq:fom-simple} to the pole-residue form
in \Cref{thm:stat-cond},
write $\cAf_2 = U V\tran$ for some $U, V \in \RR^{\nfom \times \nfom_2}$ of full
column rank.
Then, using the Sherman-Morrison-Woodbury formula~\cite{GolVL13},
we obtain
\begin{align*}
  \yf(\pp)
  & = \cCf \myparen*{\cAf_1 + \pp U V\tran}^{-1} \cBf \\
  & = \cCf
    \myparen*{
      \cAf_1^{-1}
      - \pp \cAf_1^{-1} U
        \myparen*{I_{\nfom_2} + \pp V\tran \cAf_1^{-1} U}^{-1}
        V\tran \cAf_1^{-1}
    }
    \cBf \\
  & = \cCf \cAf_1^{-1} \cBf
    - \pp
    \cCf \cAf_1^{-1} U
    \myparen*{I_{\nfom_2} + \pp V\tran \cAf_1^{-1} U}^{-1}
    V\tran \cAf_1^{-1} \cBf.
\end{align*}
Next, let $T \in \CC^{\nfom_2 \times \nfom_2}$ be an invertible matrix such that
$V\tran \cAf_1^{-1} U = T D T^{-1}$ for
$D = \mydiag{d_1, d_2, \ldots, d_{\nfom_2}}$.
Furthermore, define $\cCf_U = \cCf \cAf_1^{-1} U$ and
$\cBf_V = V\tran \cAf_1^{-1} \cBf$.
Continuing the above derivation, we obtain
\begin{align*}
  \yf(\pp)
  & =
    \cCf \cAf_1^{-1} \cBf
    - \pp
      \cCf_U
      \myparen*{I_{\nfom_2} + \pp T D T^{-1}}^{-1}
      \cBf_V \\
  & =
    \cCf \cAf_1^{-1} \cBf
    - \pp
      \cCf_U T
      \myparen*{I_{\nfom_2} + \pp D}^{-1}
      T^{-1} \cBf_V \\
  & =
    \cCf \cAf_1^{-1} \cBf
    - \sum_{i = 1}^{\nfom_2} \frac{\pp
        \cCf_U T e_i
        e_i\tran T^{-1} \cBf_V
      }{1 + \pp d_i}.
\end{align*}
Using
$\frac{\pp}{1 + \pp d_i}
= \frac{1}{d_i} - \frac{\frac{1}{d_i^2}}{\pp + \frac{1}{d_i}}$,
$\sum_{i = 1}^{\nfom_2} \frac{1}{d_i} e_i e_i\tran = D^{-1}$, and
$T D^{-1} T^{-1} = \myparen{V\tran \cAf_1^{-1} U}^{-1}$
yields
\begin{align*}
  \yf(\pp)
  & =
    \cCf \cAf_1^{-1} \cBf
    - \cCf_U \myparen*{V\tran \cAf_1^{-1} U}^{-1} \cBf_V
    + \sum_{i = 1}^{\nfom_2}
      \frac{\frac{1}{d_i^2}
        \cCf_U T e_i
        e_i\tran T^{-1} \cBf_V
      }{\pp + \frac{1}{d_i}}.
\end{align*}
Therefore, in the pole-residue form \Cref{thm:stat-cond}, we have
\begin{align*}
  \Phi_0
  & =
    \cCf \cAf_1^{-1} \cBf
    - \cCf \cAf_1^{-1} U
      \myparen*{V\tran \cAf_1^{-1} U}^{-1}
      V\tran \cAf_1^{-1} \cBf, \\
  \Phi_i
  & =
    \cCf \cAf_1^{-1} U T e_i
    e_i\tran T^{-1} V\tran \cAf_1^{-1} \cBf
    / d_i^2, \\
  \nu_i
  & =
    -1 / d_i,
\end{align*}
for $i = 1, 2, \ldots, \nfom_2$.

%%% Local Variables:
%%% mode: latex
%%% TeX-master: "siam"
%%% End:

% LocalWords:  mor strom cond lti rb thm rom fom PSF

% \bibliographystyle{alphaurl}
% \bibliography{my}

\begin{thebibliography}{BGKVW10}

\bibitem[ABG10]{AntBG10}
A.~C. Antoulas, C.~A. Beattie, and S.~Gugercin.
\newblock Interpolatory model reduction of large-scale dynamical systems.
\newblock In J.~Mohammadpour and K.~M. Grigoriadis, editors, {\em Efficient
  Modeling and Control of Large-Scale Systems}, pages 3--58. Springer US,
  Boston, MA, 2010.
\newblock \href {https://doi.org/10.1007/978-1-4419-5757-3_1}
  {\path{doi:10.1007/978-1-4419-5757-3_1}}.

\bibitem[ABG20]{AntBG20}
A.~C. Antoulas, C.~A. Beattie, and S.~G{\"u}{\u{g}}ercin.
\newblock {\em Interpolatory methods for model reduction}.
\newblock Computational Science and Engineering 21. SIAM, Philadelphia, PA,
  2020.
\newblock \href {https://doi.org/10.1137/1.9781611976083}
  {\path{doi:10.1137/1.9781611976083}}.

\bibitem[ABGA13]{AniBGA13}
B.~Ani{\'{c}}, C.~Beattie, S.~Gugercin, and A.~C. Antoulas.
\newblock Interpolatory weighted-{$\mathcal{H}_2$} model reduction.
\newblock {\em Automatica}, 49(5):1275--1280, 2013.
\newblock \href {https://doi.org/10.1016/j.automatica.2013.01.040}
  {\path{doi:10.1016/j.automatica.2013.01.040}}.

\bibitem[BB12]{BenB12}
P.~Benner and T.~Breiten.
\newblock Interpolation-based {$\mathcal{H}_2$}-model reduction of bilinear
  control systems.
\newblock {\em {SIAM} J. Matrix Anal. Appl.}, 33:859--885, 2012.
\newblock \href {https://doi.org/10.1137/110836742}
  {\path{doi:10.1137/110836742}}.

\bibitem[BB14]{BeaB14}
C.~A. Beattie and P.~Benner.
\newblock {$\mathcal{H}_2$}-optimality conditions for structured dynamical
  systems.
\newblock Preprint MPIMD/14-18, Max Planck Institute Magdeburg, 2014.
\newblock URL: \url{https://csc.mpi-magdeburg.mpg.de/preprints/2014/18/}.

\bibitem[BBBG11]{BauBBetal11}
U.~Baur, C.~A. Beattie, P.~Benner, and S.~Gugercin.
\newblock Interpolatory projection methods for parameterized model reduction.
\newblock {\em {SIAM} J. Sci. Comput.}, 33(5):2489--2518, 2011.
\newblock \href {https://doi.org/10.1137/090776925}
  {\path{doi:10.1137/090776925}}.

\bibitem[BBG15]{BreBG15}
T.~Breiten, C.~Beattie, and S.~Gugercin.
\newblock Near-optimal frequency-weighted interpolatory model reduction.
\newblock {\em Systems Control Lett.}, 78:8--18, 2015.
\newblock \href {https://doi.org/10.1016/j.sysconle.2015.01.005}
  {\path{doi:10.1016/j.sysconle.2015.01.005}}.

\bibitem[BG09]{BeaG09}
C.~Beattie and S.~Gugercin.
\newblock Interpolatory projection methods for structure-preserving model
  reduction.
\newblock {\em Systems Control Lett.}, 58(3):225--232, 2009.
\newblock \href {https://doi.org/10.1016/j.sysconle.2008.10.016}
  {\path{doi:10.1016/j.sysconle.2008.10.016}}.

\bibitem[BG17a]{BeaG17}
C.~Beattie and S.~Gugercin.
\newblock {\em Chapter 7: Model Reduction by Rational Interpolation}, pages
  297--334.
\newblock SIAM, 2017.
\newblock \href {https://doi.org/10.1137/1.9781611974829.ch7}
  {\path{doi:10.1137/1.9781611974829.ch7}}.

\bibitem[BG17b]{BerG17}
Mario Berljafa and Stefan G{\"{u}}ttel.
\newblock The {RKFIT} algorithm for nonlinear rational approximation.
\newblock {\em {SIAM} J. Sci. Comput.}, 39(5):A2049--A2071, 2017.
\newblock \href {https://doi.org/10.1137/15M1025426}
  {\path{doi:10.1137/15M1025426}}.

\bibitem[BGG18]{BenGG18}
P.~Benner, P.~Goyal, and S.~Gugercin.
\newblock {$\mathcal{H}_2$}-quasi-optimal model order reduction for
  quadratic-bilinear control systems.
\newblock {\em {SIAM} J. Matrix Anal. Appl.}, 39(2):983--1032, 2018.
\newblock \href {https://doi.org/10.1137/16M1098280}
  {\path{doi:10.1137/16M1098280}}.

\bibitem[BGKVW10]{BunKVetal10}
A.~Bunse-Gerstner, D.~Kubalinska, G.~Vossen, and D.~Wilczek.
\newblock {$h_2$}-norm optimal model reduction for large scale discrete
  dynamical {MIMO} systems.
\newblock {\em J. Comput. Appl. Math.}, 233(5):1202--1216, 2010.
\newblock \href {https://doi.org/10.1016/j.cam.2008.12.029}
  {\path{doi:10.1016/j.cam.2008.12.029}}.

\bibitem[BGTQ{\etalchar{+}}20]{BenGQetal20b}
P.~Benner, S.~Grivet-Talocia, A.~Quarteroni, G.~Rozza, W.~Schilders, and L.~M.
  Silveira, editors.
\newblock {\em Model Order Reduction: Volume 2: Snapshot-Based Methods and
  Algorithms}.
\newblock De Gruyter, Berlin, Boston, 2020.
\newblock \href {https://doi.org/10.1515/9783110671490}
  {\path{doi:10.1515/9783110671490}}.

\bibitem[BGW21a]{BenGW21b}
P.~Benner, S.~Gugercin, and S.~W.~R. Werner.
\newblock Structure-preserving interpolation for model reduction of parametric
  bilinear systems.
\newblock {\em Automatica}, 132(109799):1--9, 2021.
\newblock \href {https://doi.org/10.1016/j.automatica.2021.109799}
  {\path{doi:10.1016/j.automatica.2021.109799}}.

\bibitem[BGW21b]{BenGW21a}
P.~Benner, S.~Gugercin, and S.~W.~R. Werner.
\newblock Structure-preserving interpolation of bilinear control systems.
\newblock {\em Adv. Comput. Math.}, 47(43):1--38, May 2021.
\newblock \href {https://doi.org/10.1007/s10444-021-09863-w}
  {\path{doi:10.1007/s10444-021-09863-w}}.

\bibitem[BOCW17]{Benetal17}
P.~Benner, M.~Ohlberger, A.~Cohen, and K.~Willcox.
\newblock {\em Model Reduction and Approximation}.
\newblock SIAM, Philadelphia, PA, 2017.
\newblock \href {https://doi.org/10.1137/1.9781611974829}
  {\path{doi:10.1137/1.9781611974829}}.

\bibitem[CV02]{ChaV02}
Y.~Chahlaoui and P.~{Van Dooren}.
\newblock A collection of benchmark examples for model reduction of linear time
  invariant dynamical systems.
\newblock Technical Report 2002-2, SLICOT Working Note, 2002.
\newblock Available from \url{www.slicot.org}.

\bibitem[DGB15a]{DrmGB15}
Z.~Drma{\v{c}}, S.~Gugercin, and C.~Beattie.
\newblock Quadrature-based vector fitting for discretized {$\mathcal{H}_{2}$}
  approximation.
\newblock {\em {SIAM} J. Sci. Comput.}, 37(2):A625--A652, 2015.
\newblock \href {https://doi.org/10.1137/140961511}
  {\path{doi:10.1137/140961511}}.

\bibitem[DGB15b]{DrmGB15a}
Z.~Drma{\v{c}}, S.~Gugercin, and C.~Beattie.
\newblock Vector fitting for matrix-valued rational approximation.
\newblock {\em {SIAM} J. Sci. Comput.}, 37(5):A2346--A2379, 2015.
\newblock \href {https://doi.org/10.1137/15M1010774}
  {\path{doi:10.1137/15M1010774}}.

\bibitem[DS11]{DruS11}
V.~Druskin and V.~Simoncini.
\newblock Adaptive rational {K}rylov subspaces for large-scale dynamical
  systems.
\newblock {\em Systems Control Lett.}, 60(8):546--560, 2011.
\newblock \href {https://doi.org/10.1016/j.sysconle.2011.04.013}
  {\path{doi:10.1016/j.sysconle.2011.04.013}}.

\bibitem[DSZ14]{DruSZ14}
V.~Druskin, V.~Simoncini, and M.~Zaslavsky.
\newblock Adaptive tangential interpolation in rational {K}rylov subspaces for
  {MIMO} dynamical systems.
\newblock {\em {SIAM} J. Matrix Anal. Appl.}, 35(2):476--498, 2014.
\newblock \href {https://doi.org/10.1137/120898784}
  {\path{doi:10.1137/120898784}}.

\bibitem[FAB17]{FenAB17}
L.~Feng, A.~C. Antoulas, and P.~Benner.
\newblock Some a posteriori error bounds for reduced order modelling of
  (non-)parametrized linear systems.
\newblock {\em {ESAIM}: Math. Model. Numer. Anal.}, 51(6):2127--2158, 2017.
\newblock \href {https://doi.org/10.1051/m2an/2017014}
  {\path{doi:10.1051/m2an/2017014}}.

\bibitem[FB19]{FenB19}
L.~Feng and P.~Benner.
\newblock A new error estimator for reduced-order modeling of linear parametric
  systems.
\newblock {\em {IEEE} Trans. Microw. Theory Techn.}, 67(12):4848--4859, 2019.
\newblock \href {https://doi.org/10.1109/TMTT.2019.2948858}
  {\path{doi:10.1109/TMTT.2019.2948858}}.

\bibitem[FB21]{FenB21}
L.~Feng and P.~Benner.
\newblock On error estimation for reduced-order modeling of linear
  non-parametric and parametric systems.
\newblock {\em {ESAIM}: Math. Model. Numer. Anal.}, 55(2):561--594, 2021.
\newblock \href {https://doi.org/10.1051/m2an/2021001}
  {\path{doi:10.1051/m2an/2021001}}.

\bibitem[FBG13]{FlaBG13}
G.~Flagg, C.~A. Beattie, and S.~Gugercin.
\newblock Interpolatory {$H_{\infty}$} model reduction.
\newblock {\em Systems Control Lett.}, 62(7):567--574, 2013.
\newblock \href {https://doi.org/10.1016/j.sysconle.2013.03.006}
  {\path{doi:10.1016/j.sysconle.2013.03.006}}.

\bibitem[FG15]{FlaG15}
G.~Flagg and S.~Gugercin.
\newblock Multipoint {V}olterra series interpolation and {$\mathcal{H}_2$}
  optimal model reduction of bilinear systems.
\newblock {\em {SIAM} J. Matrix Anal. Appl.}, 36(2):549--579, 2015.
\newblock \href {https://doi.org/10.1137/130947830}
  {\path{doi:10.1137/130947830}}.

\bibitem[GAB06]{GugAB06}
S.~Gugercin, A.~C. Antoulas, and C.~A. Beattie.
\newblock A rational {K}rylov iteration for optimal {$\mathcal{H}_2$} model
  reduction.
\newblock In {\em Proc. of the 17th International Symposium on Mathematical
  Theory of Networks and Systems}, pages 1665--1667, 2006.

\bibitem[GAB08]{GugAB08}
S.~Gugercin, A.~C. Antoulas, and C.~Beattie.
\newblock {$\mathcal{H}_2$} model reduction for large-scale linear dynamical
  systems.
\newblock {\em {SIAM} J. Matrix Anal. Appl.}, 30(2):609--638, 2008.
\newblock \href {https://doi.org/10.1137/060666123}
  {\path{doi:10.1137/060666123}}.

\bibitem[GPBvdS12]{GugPBS12}
S.~Gugercin, R.~V. Polyuga, C.~Beattie, and A.~van~der Schaft.
\newblock Structure-preserving tangential interpolation for model reduction of
  port-{H}amiltonian systems.
\newblock {\em Automatica}, 48(9):1963--1974, 2012.
\newblock \href {https://doi.org/10.1016/j.automatica.2012.05.052}
  {\path{doi:10.1016/j.automatica.2012.05.052}}.

\bibitem[Gri18]{Gri18}
A.~R. Grimm.
\newblock {\em Parametric Dynamical Systems: Transient Analysis and Data Driven
  Modeling}.
\newblock PhD thesis, Virginia Polytechnic Institute and State University,
  2018.
\newblock URL: \url{http://hdl.handle.net/10919/83840}.

\bibitem[GS99]{GusS99}
B.~Gustavsen and A.~Semlyen.
\newblock Rational approximation of frequency domain responses by vector
  fitting.
\newblock {\em {IEEE} Trans. Power Del.}, 14(3):1052--1061, 1999.
\newblock \href {https://doi.org/10.1109/61.772353}
  {\path{doi:10.1109/61.772353}}.

\bibitem[GSW13]{GugSW13}
S.~Gugercin, T.~Stykel, and S.~Wyatt.
\newblock Model reduction of descriptor systems by interpolatory projection
  methods.
\newblock {\em {SIAM} J. Sci. Comput.}, 35(5):B1010--B1033, 2013.
\newblock \href {https://doi.org/10.1137/130906635}
  {\path{doi:10.1137/130906635}}.

\bibitem[GV13]{GolVL13}
G.~H. Golub and C.~F. {Van Loan}.
\newblock {\em Matrix Computations}.
\newblock The Johns Hopkins University Press, fourth edition, 2013.

\bibitem[HM20]{HokM20}
J.~M. Hokanson and C.~C. Magruder.
\newblock {$\mathcal{H}_2$}-optimal model reduction using projected nonlinear
  least squares.
\newblock {\em {SIAM} J. Sci. Comput.}, 42(6):A4017--A4045, 2020.
\newblock \href {https://doi.org/10.1137/19M1247863}
  {\path{doi:10.1137/19M1247863}}.

\bibitem[HRS16]{HesRS16}
J.~S. Hesthaven, G.~Rozza, and B.~Stamm.
\newblock {\em Certified reduced basis methods for parametrized partial
  differential equations}.
\newblock Springer Briefs in Mathematics. Springer, Switzerland, 2016.
\newblock \href {https://doi.org/10.1007/978-3-319-22470-1}
  {\path{doi:10.1007/978-3-319-22470-1}}.

\bibitem[MG22]{MliG22}
P.~Mlinari{\'c} and S.~Gugercin.
\newblock {$\mathcal{L}_2$}-optimal reduced-order modeling using
  parameter-separable forms.
\newblock arXiv preprint 2206.02929, 2022.
\newblock \href {https://doi.org/10.48550/arXiv.2206.02929}
  {\path{doi:10.48550/arXiv.2206.02929}}.

\bibitem[ML67]{MeiL67}
L.~Meier and D.~Luenberger.
\newblock Approximation of linear constant systems.
\newblock {\em {IEEE} Trans. Autom. Control}, 12(5):585--588, 1967.
\newblock \href {https://doi.org/10.1109/TAC.1967.1098680}
  {\path{doi:10.1109/TAC.1967.1098680}}.

\bibitem[Mli22]{Mli22b}
P.~Mlinari{\'c}.
\newblock {$\mathcal{L}_2$}-optimal interpolation experiments, August 2022.
\newblock URL: \url{https://github.com/pmli/l2-opt-interp-ex/tree/v1}.

\bibitem[NST18]{NakST18}
Y.~Nakatsukasa, O.~S{\`{e}}te, and L.~N. Trefethen.
\newblock The {AAA} algorithm for rational approximation.
\newblock {\em {SIAM} J. Sci. Comput.}, 40(3):A1494--A1522, 2018.
\newblock \href {https://doi.org/10.1137/16M1106122}
  {\path{doi:10.1137/16M1106122}}.

\bibitem[QMN16]{QuaMN16}
A.~Quarteroni, A.~Manzoni, and F.~Negri.
\newblock {\em Reduced basis methods for partial differential equations}.
\newblock UNITEXT. Springer Cham, Switzerland, 2016.
\newblock \href {https://doi.org/10.1007/978-3-319-15431-2}
  {\path{doi:10.1007/978-3-319-15431-2}}.

\bibitem[RGB18]{CarGB18}
A.~Carracedo Rodriguez, S.~Gugercin, and J.~Borggaard.
\newblock Interpolatory model reduction of parameterized bilinear dynamical
  systems.
\newblock {\em Adv. Comput. Math.}, 44(6):1887--1916, December 2018.
\newblock \href {https://doi.org/10.1007/s10444-018-9611-y}
  {\path{doi:10.1007/s10444-018-9611-y}}.

\bibitem[Son98]{Son98}
Eduardo~D. Sontag.
\newblock {\em Mathematical Control Theory}.
\newblock Springer New York, 1998.
\newblock \href {https://doi.org/10.1007/978-1-4612-0577-7}
  {\path{doi:10.1007/978-1-4612-0577-7}}.

\bibitem[VPVA14]{VuiPA14}
P.~Vuillemin, C.~Poussot-Vassal, and D.~Alazard.
\newblock Poles residues descent algorithm for optimal frequency-limited
  {$\mathcal{H}_2$} model approximation.
\newblock In {\em European Control Conference (ECC)}, pages 1080--1085, 2014.
\newblock \href {https://doi.org/10.1109/ECC.2014.6862152}
  {\path{doi:10.1109/ECC.2014.6862152}}.

\end{thebibliography}
\addcontentsline{toc}{chapter}{References}
\newcommand{\etalchar}[1]{$^{#1}$}

\end{document}